\theoremstyle{plain}
\newtheorem{theorem}{Theorem}[section]
\newtheorem{lemma}[theorem]{Lemma}
\newtheorem{proposition}[theorem]{Proposition}
\theoremstyle{remark}
\newtheorem{remark}[theorem]{Remark}
\numberwithin{equation}{section}
\def \d {\mathrm{d}}
\title[Identification of source terms in wave equation] %Use the shortened version of the full title
      {Identification of source terms in wave equation with dynamic boundary conditions}
\author{S. E. Chorfi}
\author{G. El Guermai}
\author{L. Maniar}
\author{W. Zouhair}
\address{S. E. Chorfi, G. El Guermai, L. Maniar and W. Zouhair, Cadi Ayyad University, Faculty of Sciences Semlalia, LMDP, UMMISCO (IRD-UPMC), B.P. 2390, Marrakesh, Morocco}
\email{chorphi@gmail.com, ghita.el.guermai@gmail.com, maniar@uca.ma, walid.zouhair.fssm@gmail.com}
\subjclass[2020]{Primary: 35R30, 65M32; Secondary: 35L05, 35L51, 74H75.}
 \keywords{inverse source problem, hyperbolic system, dynamic boundary conditions, Tikhonov's regularization, conjugate gradient method.}
\begin{document}
\begin{abstract}
This paper studies an inverse hyperbolic problem for the wave equation with dynamic boundary conditions. It consists of determining some forcing terms from the final overdetermination of the displacement. First, the Fréchet differentiability of the Tikhonov functional is studied, and a gradient formula is obtained via the solution of an associated adjoint problem. Then, the Lipschitz continuity of the gradient is proved. Furthermore, the existence and the uniqueness for the minimization problem are discussed. Finally, some numerical experiments for the reconstruction of an internal wave force are implemented via a conjugate gradient algorithm.
\end{abstract}

\maketitle

\section{Introduction}
The wave equation $y_{tt}(t,x) - \Delta y(t,x) = F(t,x)$, posed in a bounded spatial domain $\Omega \subset \mathbb{R}^N$, with boundary $\Gamma :=\partial \Omega$, is a prototype for hyperbolic equations that model the motion in wave phenomena (e.g., acoustic waves, electromagnetic waves, etc). In the one-dimensional framework $N=1$, it models, for instance, the small vibrations of a string subject to some external force $F$. More generally, it reflects the small vibrations of an elastic membrane ($N=2,3$). In this case, $y(t, x)$ is the vertical displacement of the membrane at point $x\in \Omega$ at time $t \in (0,T)$. In addition to the initial conditions, boundary conditions are often taken into account (and sometimes imposed) to characterize the behavior of the solution $y$ on the boundary $\Gamma$. Usually, one deals with static boundary conditions; typically, Dirichlet condition given by the trace $y_{|\Gamma}=g$, Neumann condition given by the normal derivative $\partial_\nu y=g$, and Robin condition $y_{|\Gamma} + \mu \partial_\nu y=g$ that combines both conditions, for a given function $g$ and a constant $\mu \neq 0$. This type of boundary conditions is by now classical in the literature. A less common type is given by dynamic boundary conditions that contain the time derivative of the state on $\Gamma$. In the context of waves, we find, for instance, absorbing boundary conditions \cite{Re'15}:
$$y_t(t,x) + \partial_\nu y(t,x)=G(t,x), \qquad (t,x) \in (0,T)\times \Gamma.$$
We will particularly deal with boundary conditions of type
$$y_{tt}(t,x)+ \partial_\nu y(t,x)=G(t,x), \qquad (t,x) \in (0,T)\times \Gamma;$$
which are also known as kinetic boundary conditions, and are equivalent to Wentzell boundary conditions under some regularity assumption. The physical derivation of such a dynamic boundary condition follows by the principle of stationary action \cite{Go'06}. An other type of dynamic boundary conditions takes the form $$y_{tt}(t,x)- \Delta_\Gamma y+ \partial_\nu y(t,x)=G(t,x), \qquad (t,x) \in (0,T)\times \Gamma,$$
where $\Delta_\Gamma$ denotes the Laplace-Beltrami operator.

In this paper, we investigate an inverse hyperbolic problem consisting of reconstructing some forcing terms in a wave equation with a dynamic boundary condition, from a noisy measured data at final time. Let $T>0$ be a fixed final time and $l>0$ be given. We study an inverse source problem associated with the following hyperbolic problem
\begin{empheq}[left = \empheqlbrace]{alignat=2}
\begin{aligned}
& y_{tt}(t,x) - y_{xx}(t,x)  = F(t,x), &&\qquad \text{in } (0,T)\times(0,l) , \\
& y_{tt}(t,0) - y_{x}(t,0)  = G(t), &&\qquad \text{in } (0,T), \\
& y_{tt}(t,l) + y_{x}(t,l)  = 0, &&\qquad \text{in } (0,T), \\
&(y(0,x), y(0,0),y(0,l)) = (y_{0}(x),a,b), &&\qquad \text{on } (0,l), \\
&(y_{t}(0,x),y_{t}(0,0),y_{t}(0,l)) = (y_{1}(x),c,d),   &&\qquad \text{on } (0,l), \label{eq1to4}
\end{aligned}
\end{empheq}
for some initial displacement $Y_0:=(y_0,a,b)$ and initial velocity $Y_1:=(y_1,c,d)$ belonging to a space that will be given later, and source terms $F\in L^2((0,T)\times (0,l))$ and $G\in L^2(0,T)$. In system \eqref{eq1to4}, $y(t,x)$ represents the displacement of a string of length $l>0$ at time $t$ at the position $x$. The dynamic boundary conditions $\eqref{eq1to4}_2$-$\eqref{eq1to4}_3$ reflect the kinetic energy effects at the ends of the string (the boundary).

There has been considerable interest in inverse problems for hyperbolic equations with static boundary conditions. Stability, reconstruction formula and regularization have been studied by Yamamoto in \cite{Ya'95} for determining spatial component of a source term in a hyperbolic equation using the exact boundary controllability. In \cite{Ha'09'}, Hasanov has proposed a weak solution approach to study the determination of source terms in a linear hyperbolic equation with Neumann boundary conditions from some final overdetermination data. The same machinery has been successfully adapted in \cite{Ha'09} to determine a source term in a vibrating cantilevered beam problem with mixed boundary conditions. In \cite{LHJ'16}, Lesnic et al. have considered an inverse problem for a space-dependent force in the wave equation. In the same scope, we refer to the works of Hussein and Lesnic \cite{Hu'14, Hu'16, Hu'16'}. Recently, an identification problem of a space-time dependent force from many integral observations in a hyperbolic equation with Dirichlet or Robin boundary conditions has been investigated by Alosaimi et al. in \cite{ALD'20}.

In contrast to the large literature for static boundary conditions, there are not sufficient researches on inverse hyperbolic problems incorporating dynamic boundary conditions, in spite of the well-established literature for the direct problems. Some recent works have been lunched for inverse parabolic problems with dynamic boundary conditions \cite{ACM'21, ACM'21', ACMO'20}. As for direct problems, various theoretical approaches have been developed for the analysis of hyperbolic evolution equations with dynamic boundary conditions. In \cite{Mu'06}, Mugnolo has studied the wellposedness of some abstract wave equations with dynamic boundary conditions of acoustic type in the framework of operator matrices. More recently in \cite{Gu'20}, Guidetti has proven some wellposedness results for a general class of mixed hyperbolic systems with dynamic and Wentzell boundary conditions using a semigroup approach. The controllability of a wave equation with oscillatory boundary conditions has been investigated by Gal and Tebou in \cite{CT'17} using Carleman estimates approach.

In this paper, we continue the developments of the weak solution approach for the numerical identification of source forces in the wave equation with dynamic boundary conditions. Solving such a hyperbolic problem with dynamic boundary conditions by using a similar methodology has not been addressed in the literature, as far as we know.

The rest of the paper is organized as follows: in Section \ref{sec2}, we briefly recall some wellposedness and regularity results concerning the system \eqref{eq1to4}. In Section \ref{sec3}, we prove an explicit gradient formula for the Tikhonov functional via the solution of a suitable adjoint problem. Then, we show the Lipschitz continuity of the gradient of the cost functional. In Section \ref{sec4}, we highlight the existence and uniqueness of a quasi-solution to our problem. In Section \ref{sec5}, the theoretical results are validated via a conjugate gradient algorithm designed for the numerical recovery of an unknown internal force. In Section \ref{sec6}, we summarize some conclusions and final remarks.

\section{Wellposedness and regularity of the solution}\label{sec2}
In this section, we briefly present some preliminary results on the wellposedness and the regularity of the solution of system \eqref{eq1to4}. Let us introduce the following real spaces
$$\mathbb{L}^2:=L^2(0,l)\times \mathbb{R}^2 \qquad \text{ and} \qquad \mathbb{L}^2_T:=L^2((0,T)\times(0,l))\times L^2(0,T).$$
$\mathbb{L}^2$ and $\mathbb{L}^2_T$ are Hilbert spaces equipped with the inner products given by
\begin{align*}
\langle (y,a,b),(z,c,d)\rangle_{\mathbb{L}^2} &=\langle y,z\rangle_{L^2(0,l)}+ ac+bd,\\
\langle (y,y_1),(z,z_1)\rangle_{\mathbb{L}^2_T} &=\langle y,z\rangle_{L^2((0,T)\times(0,l))} +\langle y_1,z_1\rangle_{L^2(0,T)},
\end{align*}
respectively. We also introduce the space 
\begin{equation*}
\mathbb{H}^{k}:=\left\{(u, u(0), u(l))\in H^{k}(0, l)\times \mathbb{R}^2\right\} \text { for } k=1,2,
\end{equation*}
equipped with the standard product norm. We define the energy phase space $\mathcal{H}_{\text{en}}:=\mathbb{H}^1\times\mathbb{L}^2$, endowed with the norm
$$\|(y,a,b,z,c,d)\|_{\mathcal{H}_{\text{en}}}= \|(y,a,b)\|_{\mathbb{H}^1}+\|(z,c,d)\|_{\mathbb{L}^2}.$$

We rewrite \eqref{eq1to4} in the following abstract form
\begin{equation*}
\text{(ACP)	} \; \begin{cases}
\hspace{-0.1cm} \partial_t \mathbf{Y}=\mathcal{A} \mathbf{Y}+ \mathcal{F}, \quad 0<t \le T, \nonumber\\
\hspace{-0.1cm} \mathbf{Y}(0)=\mathbf{Y}_0, \nonumber
\end{cases}
\end{equation*}
where $\mathbf{Y}_0:=\left(y_0,a,b, y_1, c,d\right)$, $\mathbf{Y}:=\left(y,y(t,0),y(t,l),y_{t},y_{t}(t,0),y_{t}(t,l)\right)$. The source term is $\mathcal{F}=(0,0,0,F,G,0)$, and the linear operator $\mathcal{A} \colon D(\mathcal{A}) \subset \mathbb{L}^2 \longrightarrow \mathbb{L}^2$ is given by
\begin{equation*}
\mathcal{A}=\begin{pmatrix} O_3 & I_{3} \\ \mathcal{B} & O_3\end{pmatrix}, \quad D(\mathcal{A})=D(\mathcal{B}) \times \mathbb{H}^1, \qquad   \mathcal{B}=\begin{pmatrix} \partial_{xx} & 0 & 0 \\ \partial_{x|x=0} & 0 & 0 \\ -\partial_{x|x=l} & 0 & 0\end{pmatrix} , \quad D(\mathcal{B})=\mathbb{H}^2.
\end{equation*}
The operator $\mathcal{A}$ generates a strongly continuous group $\left(\mathrm{e}^{t\mathcal{A}}\right)_{t\in \mathbb{R}}$ on the phase space $\mathcal{H}_{\text{en}}$, see \cite[Theorem 2.1]{Gu'20} for more details. Consequently, the following result holds.
\newpage

\begin{proposition}
Let $(F,G)\in \mathbb{L}_T^2$.
\begin{itemize}
\item[\text{(a)}] For each $\mathbf{Y}_0\in \mathbb{H}^2\times \mathbb{H}^1$, there exists a unique classical solution to \eqref{eq1to4} such that $\mathbf{Y}\in C^1\left([0,T];\mathbb{H}^1\right)\cap C\left([0,T];\mathbb{H}^2\right)$ and $\mathbf{Y}\in C^2\left([0,T];\mathbb{L}^2\right)$.
\item[\text{(b)}] For each $\mathbf{Y}_0\in \mathcal{H}_{\text{en}}$, there exists a unique mild solution $\mathbf{Y}$ to \eqref{eq1to4} such that $\mathbf{Y}\in C\left([0,T];\mathbb{H}^1\right)\cap C^1\left([0,T];\mathbb{L}^2\right)$.
\end{itemize}
\end{proposition}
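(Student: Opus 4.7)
The plan is to apply the standard theory of strongly continuous groups to the abstract Cauchy problem (ACP), leveraging the fact already cited from \cite[Theorem 2.1]{Gu'20} that $\mathcal{A}$ generates a $C_0$-group $(e^{t\mathcal{A}})_{t\in\mathbb{R}}$ on the energy phase space $\mathcal{H}_{\text{en}}=\mathbb{H}^1\times\mathbb{L}^2$. A key preliminary observation is that the forcing $\mathcal{F}=(0,0,0,F,G,0)$ belongs to $L^2(0,T;\mathcal{H}_{\text{en}})$, since its nonzero entries $(F,G)\in\mathbb{L}^2_T$ live precisely in the $\mathbb{L}^2$ (velocity) factor of $\mathcal{H}_{\text{en}}$.

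For part (b), I would define the mild solution by the variation-of-constants formula
\[
\mathbf{Y}(t)=e^{t\mathcal{A}}\mathbf{Y}_0+\int_{0}^{t}e^{(t-s)\mathcal{A}}\mathcal{F}(s)\,ds,\qquad t\in[0,T],
\]
which lies in $C([0,T];\mathcal{H}_{\text{en}})$ by the standard inhomogeneous semigroup theorem. Reading off the six components through the decomposition $\mathcal{H}_{\text{en}}=\mathbb{H}^1\times\mathbb{L}^2$, the position block $(y,y(t,0),y(t,l))$ is continuous into $\mathbb{H}^1$, and the velocity block $(y_t,y_t(t,0),y_t(t,l))$ is continuous into $\mathbb{L}^2$; identifying the second block with the distributional time derivative of the first (via the top row of the ACP) yields $\mathbf{Y}\in C([0,T];\mathbb{H}^1)\cap C^1([0,T];\mathbb{L}^2)$. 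Uniqueness follows from the group property: the difference of two mild solutions solves the homogeneous ACP with zero initial datum and must therefore vanish.

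For part (a), the initial datum satisfies $\mathbf{Y}_0\in D(\mathcal{A})=\mathbb{H}^2\times\mathbb{H}^1$, so $e^{t\mathcal{A}}\mathbf{Y}_0\in C([0,T];D(\mathcal{A}))\cap C^1([0,T];\mathcal{H}_{\text{en}})$. Component-by-component this gives the position in $C([0,T];\mathbb{H}^2)$, the velocity in $C([0,T];\mathbb{H}^1)$, and, using the $C^1$-into-$\mathcal{H}_{\text{en}}$ information, the position also in $C^1([0,T];\mathbb{H}^1)\cap C^2([0,T];\mathbb{L}^2)$. For the convolution term I would either invoke a regularity result tailored to hyperbolic problems or approximate $\mathcal{F}$ by smooth forcings $\mathcal{F}_n$, apply classical theory to obtain $\mathbf{Y}_n$ in the target spaces, and pass to the limit via uniform higher-order energy estimates (obtained by testing the ACP against $\mathcal{A}\mathbf{Y}_n$ and exploiting the skew-symmetric character of $\mathcal{A}$ on $\mathcal{H}_{\text{en}}$).

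The main obstacle is precisely this last step of part (a): the statement claims classical regularity while assuming only $\mathcal{F}\in L^2$, whereas the textbook theorem ensuring that the mild solution takes values in $D(\mathcal{A})$ usually demands $\mathcal{F}\in W^{1,1}(0,T;\mathcal{H}_{\text{en}})$ or $\mathcal{F}\in L^1(0,T;D(\mathcal{A}))$. Bridging this gap is the substantive part of the proof and must rely on the hyperbolic structure of the equation to yield energy estimates that remain uniform under density approximation; once these are in hand, the rest of the argument reduces to routine bookkeeping with the semigroup formula.
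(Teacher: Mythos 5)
Your overall strategy coincides with the paper's: the authors give no proof at all, simply citing \cite[Theorem 2.1]{Gu'20} for the generation of the $C_0$-group $(\mathrm{e}^{t\mathcal{A}})_{t\in\mathbb{R}}$ on $\mathcal{H}_{\text{en}}$ and writing ``consequently, the following result holds,'' i.e.\ they implicitly invoke exactly the standard semigroup machinery you lay out. Your treatment of part (b) --- variation of constants, reading off the position and velocity blocks, uniqueness from the homogeneous problem --- is correct and complete.

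The problem is part (a), and you have correctly diagnosed it, but your proposed repair does not work. You suggest approximating $\mathcal{F}$ by smooth $\mathcal{F}_n$ and passing to the limit via ``uniform higher-order energy estimates obtained by testing against $\mathcal{A}\mathbf{Y}_n$.'' For a skew-adjoint generator there is no smoothing: the higher-order energy $\|\mathcal{A}\mathbf{Y}_n(t)\|_{\mathcal{H}_{\text{en}}}$ obeys a Gronwall inequality driven by $\|\mathcal{A}\mathcal{F}_n\|$ or $\|\partial_t\mathcal{F}_n\|$, neither of which is bounded by $\|\mathcal{F}\|_{L^2(0,T;\mathcal{H}_{\text{en}})}$ along an approximating sequence. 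Concretely, taking $\mathbf{Y}_0=0$ and $F(t,x)=g(t)h(x)$ with $h$ smooth and $g\in L^2(0,T)\setminus C([0,T])$, the equation $y_{tt}=y_{xx}+F$ forces $y_{tt}$ to be merely $L^2$ in time, so $\mathbf{Y}\in C^2([0,T];\mathbb{L}^2)$ fails; part (a) is simply not true for arbitrary $(F,G)\in\mathbb{L}^2_T$. The honest conclusion --- which you should state rather than attempt to bridge --- is that (a) requires the additional hypothesis $\mathcal{F}\in W^{1,1}(0,T;\mathcal{H}_{\text{en}})$ or $\mathcal{F}\in L^1(0,T;D(\mathcal{A}))$ (plus the compatibility already encoded in $\mathbf{Y}_0\in D(\mathcal{A})$), under which the classical inhomogeneous semigroup theorem gives the stated regularity directly. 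This is a defect of the proposition as printed, not of your part (b) argument; the rest of the paper only ever uses mild solutions, so nothing downstream is affected.
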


\section{Fréchet differentiability and gradient formula of the cost}\label{sec3}
In this section, we consider the following inverse source problem.

\noindent\textbf{Inverse Source Problem (ISP).} Given $(Y_0, Y_1) \in \mathcal{H}_{\text{en}}$, the couple of source terms $(F,G)\in \mathbb{L}^2_T$ in \eqref{eq1to4} is unknown and needs to be recovered from the final displacement at $T$, namely,
$$Y_{T}:=\left(y(T,\cdot),y(T,0),y(T,l) \right)\in \mathbb{L}^2,$$
which is not necessarily smooth due to the numerical noise.

Let $Y(t,\cdot,\mathcal{W})$ be the mild solution of \eqref{eq1to4} corresponding to the source terms $\mathcal{W}=(F,G)\in \mathbb{L}^2_T$. We introduce the input-output operator $\Psi \colon \mathbb{L}^2_T \longrightarrow \mathbb{L}^2$ as follows
$$(\Psi \mathcal{W})(\cdot)=Y_T(\cdot):=Y(T,\cdot,\mathcal{W}) \qquad\text{ on } \;(0,l).$$
Therefore, the ISP with the given output data $Y_T$, can be reformulated as solving the following equation
\begin{equation}\label{2eq2.3}
\Psi \mathcal{W}=Y_T, \quad Y_T \in \mathbb{L}^2,
\end{equation}
which is in turn equivalent to inverting the operator $\Psi$.

In practice, the measurements of $Y_T$ we consider are far to be exact. This shows that we can never fulfill the condition \eqref{2eq2.3} in an exact manner. For this reason, we define a quasi-solution of the considered inverse problem as a solution of the following minimization problem
\begin{align}
\mathcal{J}(\mathcal{W}_*)&= \inf_{\mathcal{W}\in \mathcal{U}_\mathrm{ad}} \mathcal{J}(\mathcal{W}), \label{neq2.5}\\
\mathcal{J}(\mathcal{W})&=\frac{1}{2} \left\|Y(T, \cdot,\mathcal{W})-Y_{T}^\delta \right\|_{\mathbb{L}^2}^2, \qquad \mathcal{W}\in \mathcal{U}_\mathrm{ad}, \label{neq2.6}
\end{align}
where $Y_{T}^\delta=\left(y^\delta(T,\cdot),y^\delta(T,0),y^\delta(T,l) \right)$ is a noisy measured data of $Y_{T}$ such that $\|Y_{T}-Y_{T}^\delta\|\leq \delta$ for some noise level $\delta \geq 0$, and
\begin{align*}
\mathcal{U}_\mathrm{ad} :=\left\{(F,G) \in \mathbb{L}^2_T \colon \begin{array}{ll}
F_* \le F(t,x) \le F^* < +\infty, &\text{ a.e. }(t,x)\in (0,T) \times (0,l) \\
G_* \le G(t) \le G^* < +\infty, &\text{ a.e. } \quad t\in (0,T)
\end{array}\right\}
\end{align*}
is the set of admissible sources. Clearly, $\mathcal{U}_\mathrm{ad}$ is a closed and convex subset of $\mathbb{L}^2_T$.

Due to the ill-posedness of \eqref{2eq2.3}, caused by the compactness of $\Psi$, we usually regularize the problem by considering the following regularized Tikhonov functional
\begin{align*}
\mathcal{J}_\varepsilon(\mathcal{W})&=\frac{1}{2} \left\|Y(T, \cdot,\mathcal{W})-Y_{T}^\delta \right\|_{\mathbb{L}^2}^2 +\frac{\varepsilon}{2} \|\mathcal{W}\|_{\mathbb{L}^2_T}^2, \qquad \mathcal{W}\in \mathcal{U}_\mathrm{ad}, 
\end{align*}
where $\varepsilon >0$ is the regularizing parameter.

Next, let $Y(t,\cdot,\mathcal{W})$ and $Y(t,\cdot,\mathcal{W}+\delta\mathcal{W})$ be the solutions of the direct problem \eqref{eq1to4}, corresponding to the sources $\mathcal{W} = (F,G) $ and $\mathcal{W}+\delta\mathcal{W} = (F+\delta F,G + \delta G) $, respectively. By linearity of the system, $\delta Y := Y(t,\cdot,\mathcal{W}+\delta\mathcal{W}) - Y(t,\cdot,\mathcal{W})$ is the mild solution of the following sensitivity problem
\begin{empheq}[left = \empheqlbrace]{alignat=2}
\begin{aligned}
&\delta y_{tt}(t,x) - \delta y_{xx}(t,x)  = \delta F(t,x), &&\qquad \text{in } (0,T)\times(0,l) , \\
& \delta y_{tt}(t,0) - \delta y_{x}(t,0)  = \delta G(t), &&\qquad \text{in } (0,T), \\
&\delta y_{tt}(t,l) + \delta y_{x}(t,l)  = 0, &&\qquad \text{in } (0,T), \\
&(\delta y(0,x),\delta y(0,0),\delta y(0,l)) = (0,0,0), &&\qquad \text{on } (0,l), \\
&(\delta y_{t}(0,x),\delta y_{t}(0,0),\delta y_{t}(0,l)) =(0,0,0),   &&\qquad \text{on } (0,l). \label{eq3.4}
\end{aligned}
\end{empheq}
Next, we derive an important lemma that will allow us to compute the gradient of $\mathcal{J}$ via the mild solution $\Phi$ of an appropriate adjoint system. This is done following the adjoint methodology in \cite{Du'96, DTB'04}.
\begin{lemma}\label{2prop2.6}
For each $\mathcal{W}\in \mathcal{U}_\mathrm{ad}$, the following integral identity for the cost functional $\mathcal{J}$ holds
\begin{align}\label{equ3.5}
\delta \mathcal{J}(\mathcal{W}) = \int_{0}^{T}\int_{0}^{l}  \varphi\delta F(t,x)  \,\d x \,\d t + \int_{0}^{T} \varphi(t,0)\delta G(t) \,\d t + \frac{1}{2} \|\delta Y (T,\cdot,\mathcal{W} )\|^{2}_{\mathbb{L}^2},
\end{align}
where $\Phi(t,\cdot)=(\varphi(t,\cdot), \varphi(t,0),\varphi(t,l))$ is the mild solution of the following adjoint system
\begin{empheq}[left = \empheqlbrace]{alignat=2}
\begin{aligned}
&\varphi_{tt}(t,x) - \varphi_{xx}(t,x)  = 0, &&\, \text{in } (0,T)\times(0,l) , \\
&\varphi_{tt}(t,0) - \varphi_{x}(t,0)  = 0, &&\, \text{in } (0,T), \\
&\varphi_{tt}(t,l) + \varphi_{x}(t,l)  = 0, &&\, \text{in } (0,T), \\
&(\varphi(T,x), \varphi(T,0),\varphi(T,l)) = (0,0,0) , &&\, \text{on } (0,l), \\
&(\varphi_{t}(T,\cdot),\varphi_{t}(T,0),\varphi_{t}(T,l)) = - \left(Y(T,\cdot,\mathcal{W}) - Y_{T}^\delta \right),   &&\, \text{on } (0,l). \label{eq3.6}
\end{aligned}
\end{empheq}
\end{lemma}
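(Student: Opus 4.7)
The plan is to expand $\delta\mathcal{J}(\mathcal{W}) := \mathcal{J}(\mathcal{W}+\delta\mathcal{W}) - \mathcal{J}(\mathcal{W})$, extract its linear part in $\delta\mathcal{W}$, and rewrite it via the adjoint state $\Phi$ by applying integration by parts to the sensitivity system \eqref{eq3.4}. First I would use the identity $\tfrac{1}{2}\|u+v\|^2 - \tfrac{1}{2}\|u\|^2 = \langle u,v\rangle + \tfrac{1}{2}\|v\|^2$ with $u = Y(T,\cdot,\mathcal{W}) - Y_T^\delta$ and $v = \delta Y(T,\cdot,\mathcal{W})$, which already produces the quadratic remainder $\tfrac{1}{2}\|\delta Y(T,\cdot,\mathcal{W})\|_{\mathbb{L}^2}^2$ in \eqref{equ3.5}. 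The remaining task is therefore to identify the cross term $\langle Y(T,\cdot,\mathcal{W}) - Y_T^\delta,\,\delta Y(T,\cdot,\mathcal{W})\rangle_{\mathbb{L}^2}$ with the two source integrals involving $\varphi$ and $\varphi(t,0)$.

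To do this, I would multiply the interior equation $\delta y_{tt} - \delta y_{xx} = \delta F$ in \eqref{eq3.4} by $\varphi$ and integrate over $(0,T)\times(0,l)$, then integrate by parts twice in $t$ on the $\varphi\,\delta y_{tt}$ term and twice in $x$ on the $-\varphi\,\delta y_{xx}$ term. Using the vanishing initial data $\delta y(0,\cdot) = 0$, $\delta y_t(0,\cdot) = 0$ and the terminal data $\varphi(T,\cdot) = 0$, $\varphi_t(T,\cdot) = -(Y(T,\cdot,\mathcal{W}) - Y_T^\delta)|_{(0,l)}$, together with the interior wave equation $\varphi_{tt} - \varphi_{xx} = 0$ satisfied by $\varphi$, I would collect all double integrals involving $\delta y$ against $(\varphi_{tt} - \varphi_{xx})$, which vanish. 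What survives are an interior contribution
$\int_0^l (y(T,x,\mathcal{W}) - y^\delta(T,x))\,\delta y(T,x)\,dx$ coming from the $t$ integration by parts, plus spatial boundary contributions at $x=0$ and $x=l$.

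The main technical obstacle, and the step that genuinely uses the dynamic nature of the problem, will be handling these four spatial boundary terms $\pm\varphi\,\delta y_x$ and $\pm\varphi_x\,\delta y$ evaluated at $x=0,l$. I would substitute the dynamic boundary conditions of \eqref{eq3.4} to replace $\delta y_x(t,0) = \delta y_{tt}(t,0) - \delta G(t)$ and $\delta y_x(t,l) = -\delta y_{tt}(t,l)$, and similarly use \eqref{eq3.6} to replace $\varphi_x(t,0) = \varphi_{tt}(t,0)$ and $\varphi_x(t,l) = -\varphi_{tt}(t,l)$. This produces, at each endpoint, a pair of time integrals of the form $\int_0^T \varphi(t,\cdot)\,\delta y_{tt}(t,\cdot)\,dt - \int_0^T \varphi_{tt}(t,\cdot)\,\delta y(t,\cdot)\,dt$, plus the isolated forcing term $-\int_0^T \varphi(t,0)\,\delta G(t)\,dt$. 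Two further integrations by parts in $t$ on each such pair, using again $\varphi(T,\cdot) = 0$, $\delta y(0,\cdot) = 0 = \delta y_t(0,\cdot)$ and the terminal value of $\varphi_t$, collapse these pairs into the pointwise contributions $(y(T,0,\mathcal{W}) - y^\delta(T,0))\,\delta y(T,0)$ and $(y(T,l,\mathcal{W}) - y^\delta(T,l))\,\delta y(T,l)$.

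Finally, I would recognize the sum of the interior integral and the two boundary point contributions as precisely $\langle Y(T,\cdot,\mathcal{W}) - Y_T^\delta,\,\delta Y(T,\cdot,\mathcal{W})\rangle_{\mathbb{L}^2}$, since the $\mathbb{L}^2$ inner product on $L^2(0,l)\times\mathbb{R}^2$ carries exactly those three pieces. Rearranging and combining with the quadratic remainder yields \eqref{equ3.5}. A small caveat is that the computations above are formally valid for classical solutions, so I would first carry them out for smooth data $(F,G)$ and $\mathbf{Y}_0 \in \mathbb{H}^2\times\mathbb{H}^1$ using part (a) of the wellposedness proposition, and then extend \eqref{equ3.5} to the full admissible class by density, using the continuous dependence of the mild solution on the data provided by part (b).
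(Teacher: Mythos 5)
Your proposal is correct and follows essentially the same route as the paper: the same algebraic splitting of $\delta\mathcal{J}$ into a cross term plus the quadratic remainder, followed by the same duality integration by parts in $t$ and $x$ against the adjoint state, with the dynamic boundary conditions of \eqref{eq3.4} and \eqref{eq3.6} used to cancel the spatial boundary terms and isolate $\int_0^T\varphi(t,0)\,\delta G(t)\,\d t$. The only (cosmetic) difference is that you test the sensitivity equation against $\varphi$ and work forward to the terminal terms, whereas the paper starts from the terminal cross terms and unwinds them backward; your explicit remark about first arguing for classical solutions and then passing to mild solutions by density is a point of rigor the paper leaves implicit.
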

\begin{proof}
Let $\mathcal{W}, \mathcal{W}+ \delta \mathcal{W}\in \mathcal{U}_\mathrm{ad}$. First, we develop the variation
$$\delta \mathcal{J}(\mathcal{W}):=\mathcal{J}(\mathcal{W}+\delta \mathcal{W}) -\mathcal{J}(\mathcal{W}).$$
We have
\begin{align*}
\delta \mathcal{J}(\mathcal{W})&= \frac{1}{2} \left\|Y(T, \cdot, \mathcal{W}+ \delta \mathcal{W})-Y_{T}^\delta \right\|_{\mathbb{L}^2}^2 -\frac{1}{2} \left\|Y(T, \cdot,\mathcal{W})-Y_{T}^\delta \right\|_{\mathbb{L}^2}^2 \nonumber\\
& \hspace{-1.2cm}= \frac{1}{2} \left(\left\|y(T, \cdot, \mathcal{W}+ \delta \mathcal{W})-y_{T}^\delta \right\|_{L^2(0,l)}^2 + \left|y(T,0, \mathcal{W}+ \delta \mathcal{W})-y_{T}^\delta (0) \right|^2 + \left|y(T,l, \mathcal{W}+ \delta \mathcal{W})-y_{T}^\delta (l) \right|^2\right) \nonumber\\
& \hspace{-1cm} - \frac{1}{2}\left(\left\|y(T, \cdot, \mathcal{W})-y_{T}^\delta \right\|_{L^2(0,l)}^2 + \left|y(T,0, \mathcal{W})-y_{T}^\delta (0) \right|^2 + \left|y(T,l, \mathcal{W})-y_{T}^\delta (l)\right|^2\right) \nonumber\\
&\hspace{-1.2cm} = \frac{1}{2} \int_{0}^{l} \left[(y(T, x, \mathcal{W}+ \delta \mathcal{W})-y_{T}^\delta(x))^2  - (y(T, x, \mathcal{W})-y_{T}^\delta(x))^2 \right] \,\d x \\
& \hspace{-1cm}+ \frac{1}{2} \left[(y(T,0, \mathcal{W}+ \delta \mathcal{W})-y_{T}^\delta(0))^2 - (y(T, 0, \mathcal{W})-y_{T}^\delta(0))^2 \right]\\
&\hspace{-1cm} + \frac{1}{2} \left[(y(T,l, \mathcal{W}+ \delta \mathcal{W})-y_{T}^\delta(l))^2 - (y(T, l, \mathcal{W})-y_{T}^\delta(l))^2 \right].
\end{align*}
Since
 $\frac{1}{2} \left[(x-z)^2-(y-z)^2\right]=(y-z)(x-y)+\frac{1}{2}(x-y)^2, \; x,y\in \mathbb{R}$, then
\begin{align}
\delta \mathcal{J}(\mathcal{W}) &= \int_{0}^{l} (y(T, x, \mathcal{W})-y_{T}^\delta(x)) \delta y(T, x, \mathcal{W}) \,\d x +\frac{1}{2} \int_{0}^{l} [\delta y(T, x, \mathcal{W})]^2 \,\d x \label{2eq2.19}\\
& + \left(y(T, 0, \mathcal{W})-y_{T}^\delta(0)\right) \delta y(T, 0, \mathcal{W}) +\frac{1}{2} [\delta y(T, 0, \mathcal{W})]^2 \label{2eq2.18}\\
& + \left(y(T, l, \mathcal{W})-y_{T}^\delta(l)\right) \delta y(T, l, \mathcal{W}) +\frac{1}{2} [\delta y(T, l, \mathcal{W})]^2, \label{2eq2.20}
\end{align}
where
\begin{align*}
\delta y(T, \cdot, \mathcal{W})&= y(T, \cdot, \mathcal{W}+ \delta \mathcal{W})- y(T, \cdot, \mathcal{W}).
\end{align*}
The first integral in the right-hand side of \eqref{2eq2.19} can be expressed using $\delta Y(t,\cdot, \mathcal{W})$, $\Phi(t,\cdot,\mathcal{W})$, the mild solutions of \eqref{eq3.4} and \eqref{eq3.6} respectively. Indeed,
\begin{align}
&\int_{0}^{l} (y(T, x, \mathcal{W})-y_{T}^\delta(x)) \delta y(T, x, \mathcal{W}) \,\d x \nonumber =\int_{0}^{l} -\varphi_{t}(T, x, \mathcal{W}) \delta y(T, x, \mathcal{W}) \,\d x \nonumber\\
&= \bigintss_{0}^{l} \left[\int_0^T -\partial_{t} \left(\varphi_{t}(t, x, \mathcal{W}) \delta y(t, x, \mathcal{W})\right) \,\d t \right]\,\d x \nonumber\\
&= - \int_{0}^{T}\int_{0}^{l} \varphi_{tt} \delta y \,\d x \,\d t - \int_{0}^{T}\int_{0}^{l}  \varphi_{t}\delta y_{t}  \,\d x \,\d t \nonumber\\
&= - \int_{0}^{T}\int_{0}^{l} \varphi_{tt} \delta y \,\d x \,\d t + \int_{0}^{T}\int_{0}^{l}  \varphi\delta y_{tt}  \,\d x \,\d t \nonumber\\
&= - \int_{0}^{T}\int_{0}^{l} \varphi_{xx} \delta y \,\d x \,\d t + \int_{0}^{T}\int_{0}^{l}  \varphi\delta y_{xx}  \,\d x \,\d t+ \int_{0}^{T}\int_{0}^{l}  \varphi\delta F(t,x)  \,\d x \,\d t \nonumber\\
&= \int_{0}^{T}\left[ \varphi \delta y_{x} - \varphi_{x} \delta y \right]^{l}_{0} \,\d t + \int_{0}^{T}\int_{0}^{l}  \varphi\delta F(t,x)  \,\d x \,\d t .\label{eq3.10}
\end{align}
On the other hand, we transform the first term of \eqref{2eq2.18} in the same way as above,
\begin{align}
&(y(T, 0, \mathcal{W})-y_{T}^\delta(0)) \delta y(T, 0, \mathcal{W}) \nonumber = -\varphi_{t}(T, 0, \mathcal{W}) \delta y(T, 0, \mathcal{W}) \,\d x \nonumber\\
&= \int_0^T -\partial_{t} \left(\varphi_{t}(t, 0, \mathcal{W}) \delta y(t, 0, \mathcal{W})\right) \,\d t \nonumber\\
&= - \int_{0}^{T} \varphi_{tt}(t,0) \delta y(t,0) \,\d t - \int_{0}^{T} \varphi_{t}(t,0)\delta y_{t} (t,0) \,\d t \nonumber\\
&= - \int_{0}^{T} \varphi_{tt}(t,0) \delta y(t,0) \,\d t + \int_{0}^{T} \varphi(t,0)\delta y_{tt}(t,0) \,\d t \nonumber\\
&= - \int_{0}^{T} \varphi_{x}(t,0) \delta y(t,0) \,\d t + \int_{0}^{T} \varphi(t,0)\delta y_{x}(t,0) \,\d t + \int_{0}^{T} \varphi(t,0)\delta G(t) \,\d t .\label{eq3.11}
\end{align}
Similarly for \eqref{2eq2.20}, we obtain
\begin{align}
(y(T, l, \mathcal{W})-y_{T}^\delta(l)) \delta y(T, l, \mathcal{W}) \nonumber &= -\varphi_{t}(T, l, \mathcal{W}) \delta y(T, l, \mathcal{W}) \,\d x \nonumber\\
& \hspace{-1cm}=  \int_{0}^{T} \varphi_{x}(t,l) \delta y(t,l) \,\d t - \int_{0}^{T} \varphi(t,l)\delta y_{x}(t,l) \,\d t. \label{eq3.12}
\end{align}
Adding up the three equalities \eqref{eq3.10}, \eqref{eq3.11} and \eqref{eq3.12}, we deduce
\begin{align}
&\int_{0}^{l} (y(T, x, \mathcal{W})-y_{T}^\delta(x)) \delta y(T, x, \mathcal{W}) \,\d x +(y(T, 0, \mathcal{W})-y_{T}^\delta(0)) \delta y(T, 0, \mathcal{W})\nonumber\\
&+ (y(T, l, \mathcal{W})-y_{T}^\delta(l)) \delta y(T, l, \mathcal{W}) = \int_{0}^{T}\int_{0}^{l}  \varphi\delta F(t,x)  \,\d x \,\d t + \int_{0}^{T} \varphi(t,0)\delta G(t) \,\d t. \nonumber
\end{align}
As a result,
\begin{align*}
\delta \mathcal{J}(\mathcal{W}) = \int_{0}^{T}\int_{0}^{l}  \varphi(t,x)\delta F(t,x)  \,\d x \,\d t + \int_{0}^{T} \varphi(t,0)\delta G(t) \,\d t + \frac{1}{2} \|\delta Y (T,\cdot,\mathcal{W} )\|^{2}_{\mathbb{L}^2}.
\end{align*}
This completes the proof.
\end{proof}
Next, we show that the third term on the right-hand side of \eqref{equ3.5} is of order $\mathcal{O}\left(\|\delta \mathcal{W}\|^2_{\mathbb{L}^2_{T}}\right)$.
\begin{lemma}\label{lemma3.2}
Let $\delta y (t,\cdot,\mathcal{W} )$ denote the solution of the problem \eqref{eq3.4} corresponding to the variation $\delta \mathcal{W} \in \mathcal{U}_\mathrm{ad}$. Then the following estimate
holds
\begin{equation}\label{equ3.13}
\|\delta Y (T,\cdot,\mathcal{W} )\|^{2}_{\mathbb{L}^2} \leq 3  T^{3}  \|\delta \mathcal{W}\|^2_{\mathbb{L}^2_{T} }.
\end{equation}
\end{lemma}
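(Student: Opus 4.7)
The plan is to control $\|\delta Y(T,\cdot,\mathcal{W})\|^2_{\mathbb{L}^2}$ in two stages: first reduce each of the three terms in the norm to a time integral of $\|\delta Y_t\|^2_{\mathbb{L}^2}$ using the vanishing initial data, and then bound that time integral by a standard energy identity for the sensitivity system \eqref{eq3.4}. Since $\delta y(0,\cdot)=0$, $\delta y(0,0)=0$, $\delta y(0,l)=0$, I can write
\begin{equation*}
\delta y(T,x)=\int_0^T \delta y_t(t,x)\,\d t,\qquad \delta y(T,0)=\int_0^T \delta y_t(t,0)\,\d t,\qquad \delta y(T,l)=\int_0^T \delta y_t(t,l)\,\d t,
\end{equation*}
so that Cauchy--Schwarz yields, for each of the three pieces individually,
\begin{equation*}
\|\delta y(T,\cdot)\|^2_{L^2(0,l)}\le T\!\int_0^T\!\|\delta y_t(t,\cdot)\|^2_{L^2(0,l)}\,\d t,\qquad |\delta y(T,0)|^2,\,|\delta y(T,l)|^2\le T\!\int_0^T\!\|\delta Y_t(t,\cdot)\|^2_{\mathbb{L}^2}\,\d t.
\end{equation*}
Summing these three estimates (and bounding each individual component by the full $\mathbb{L}^2$-norm) will produce the factor $3T$ in front of a time integral of $\|\delta Y_t\|^2_{\mathbb{L}^2}$.

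Next I would set up the natural energy
\begin{equation*}
E(t)=\tfrac{1}{2}\bigl(\|\delta y_t(t,\cdot)\|^2_{L^2(0,l)}+\|\delta y_x(t,\cdot)\|^2_{L^2(0,l)}+|\delta y_t(t,0)|^2+|\delta y_t(t,l)|^2\bigr),
\end{equation*}
differentiate in time, and substitute from \eqref{eq3.4}. After integrating the cross term $\int_0^l \delta y_x\delta y_{xt}\,\d x$ by parts, the two boundary contributions at $x=0$ and $x=l$ cancel exactly against the boundary terms produced by the kinetic boundary conditions, leaving the clean identity
\begin{equation*}
E'(t)=\int_0^l \delta F(t,x)\,\delta y_t(t,x)\,\d x+\delta G(t)\,\delta y_t(t,0).
\end{equation*}
A Cauchy--Schwarz estimate then gives $E'(t)\le \sqrt{2E(t)}\,\sqrt{\|\delta F(t,\cdot)\|^2_{L^2}+|\delta G(t)|^2}$, which integrated from $0$ (with $E(0)=0$) yields $\sqrt{E(t)}\le \tfrac{1}{\sqrt{2}}\int_0^t \sqrt{\|\delta F\|^2+|\delta G|^2}\,\d s$, and hence the crude but sufficient bound $E(t)\le \tfrac{t}{2}\|\delta\mathcal{W}\|^2_{\mathbb{L}^2_T}$. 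Since $\|\delta Y_t(t,\cdot)\|^2_{\mathbb{L}^2}\le 2E(t)$, I obtain $\int_0^T\|\delta Y_t\|^2_{\mathbb{L}^2}\,\d t\le T^2\|\delta\mathcal{W}\|^2_{\mathbb{L}^2_T}$, and combining with the first step gives exactly the required $3T^3$.

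The main subtlety is the cancellation of the boundary terms in the energy identity: one has to be careful to account for all three dynamic ingredients (the interior PDE and the two kinetic conditions) when differentiating $E$, so that the $\delta y_x(t,0)$ and $\delta y_x(t,l)$ contributions drop out and only the genuine source terms $\delta F$ and $\delta G$ remain on the right-hand side. Once that identity is in hand, everything else is Cauchy--Schwarz and a Gronwall-type step on $\sqrt{E}$. The looseness in the constants (factor $3$ rather than the sharper $\tfrac{1}{2}$) comes from estimating each of the three components of $\|\delta Y(T)\|^2_{\mathbb{L}^2}$ by the full $\mathbb{L}^2$-norm of $\delta Y_t$; this is intentional and harmless for the later use of \eqref{equ3.13} in showing that the third term of \eqref{equ3.5} is $\mathcal{O}(\|\delta\mathcal{W}\|^2_{\mathbb{L}^2_T})$.
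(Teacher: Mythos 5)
Your proposal is correct and follows essentially the same route as the paper's proof: the energy identity for the sensitivity system with the boundary terms cancelling down to $\int_0^l\delta F\,\delta y_t\,\d x+\delta G\,\delta y_t(t,0)$, the Gronwall-type step on $\sqrt{E}$ (the paper's $\mathcal{L}(t)$), and the fundamental theorem of calculus plus Cauchy--Schwarz to pass from $\|\delta Y(T,\cdot)\|^2_{\mathbb{L}^2}$ to $\int_0^T\|\delta Y_t(t,\cdot)\|^2_{\mathbb{L}^2}\,\d t$. The only difference is bookkeeping of constants (you collect the factor $3$ in the first reduction, the paper collects it via $\|\delta Y_t\|^2\le 6\mathcal{L}^2$ and a Fubini computation), and both land on the same bound $3T^3\|\delta\mathcal{W}\|^2_{\mathbb{L}^2_T}$.
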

\begin{proof}
Multiplying \eqref{eq3.4}$_{1}$ by $\delta y_{t} (t,\cdot )$ and integrating by part over $[0,l]$, we obtain
\begin{equation*}
\frac{1}{2}\frac{\d}{\d t} \int_{0}^{l} (\delta y_{t}(t,x))^2 \, \d x - \left[\delta y_{x}(t,\cdot) \delta y_{t}(t,\cdot) \right]^{l}_{0} + \frac{1}{2} \frac{\d}{\d t}  \int_{0}^{l} (\delta y_{x}(t,x))^2 \, \d x = \int_{0}^{l}\delta F(t,x) \delta y_{t}(t,x) \, \d x .
\end{equation*}
As above, multiplying \eqref{eq3.4}$_{2}$ by $\delta y_{t}(t,0)$, and \eqref{eq3.4}$_{3}$ by $\delta y_{t}(t,l)$, we obtain
\begin{align*}
\frac{1}{2}\frac{\d}{\d t} (\delta y_{t}(t,0))^{2} -  \delta y_{x}(t,0) \delta y_{t}(t,0) &= \delta G(t) \delta y_{t}(t,0),\\
\frac{1}{2}\frac{\d}{\d t} (\delta y_{t}(t,l))^{2} + \delta y_{x}(t,l) \delta y_{t}(t,l) &= 0.
\end{align*}
Adding up the last three formulas, we arrive at
\begin{align*}
  &\frac{1}{2}\frac{\d}{\d t} \int_{0}^{l} (\delta y_{t}(t,x))^2 \, \d x +\frac{1}{2}\frac{\d}{\d t} (\delta y_{t}(t,0))^{2}+\frac{1}{2}\frac{\d}{\d t} (\delta y_{t}(t,l))^{2} + \frac{1}{2}\frac{\d}{\d t}  \int_{0}^{l} (\delta y_{x}(t,x))^2 \, \d x \\
  & \qquad = \int_{0}^{l}\delta F(t,x) \delta y_{t}(t,x) \, \d x + \delta G(t) \delta y_{t}(t,0). 
\end{align*}
Integrating both sides on $[0, t]$, $t \in [0, T ]$, we obtain 
\begin{align}\label{equ3.14}
 \mathcal{L}^{2}(t) = \int_{0}^{t}\int_{0}^{l}\delta F(\tau,x) \delta y_{\tau}(\tau,x) \, \d x\, \d \tau + \int_{0}^{t} \delta G(\tau) \delta y_{\tau}(\tau,0) \, \d \tau,
\end{align}
where
\begin{equation*}
    \mathcal{L}^{2}(t) = \frac{1}{2} \int_{0}^{l} (\delta y_{t}(t,x))^2 \, \d x +\frac{1}{2} (\delta y_{t}(t,0))^{2}+\frac{1}{2} (\delta y_{t}(t,l))^{2}+ \frac{1}{2} \int_{0}^{l} (\delta y_{x}(t,x))^2 \, \d x.
\end{equation*}
Differentiating both sides of \eqref{equ3.14}, we obtain
\begin{align}
 2\mathcal{L}^{\prime}(t)\mathcal{L}(t) = \int_{0}^{l}\delta F(t,x) \delta y_{t}(t,x) \, \d x + \delta G(t) \delta y_{t}(t,0)\qquad \forall t \in [0,T].
\end{align}
By Cauchy-Schwarz inequality, for all $t \in [0,T]$, 
\begin{align}\label{equa3.16}
 2\mathcal{L}^{\prime}(t)\mathcal{L}(t) \leq \left(\int_{0}^{l}(\delta F(t,x))^{2} \, \d x \right)^{\frac{1}{2}} \left(\int_{0}^{l} (\delta y_{t}(t,x))^{2} \, \d x \right)^{\frac{1}{2}} + \left| \delta G(t) \right| \left| \delta y_{t}(t,0)\right|.
\end{align}
Next, we estimate the term $\|\delta y_{t} (t,\cdot,\mathcal{W} )\|^{2}_{\mathbb{L}^2}$ via the energy $\mathcal{L}(t)$. We have
\begin{align}
\int_{0}^{l}  (\delta y_{t}(t,x))^{2} \, \d x &\leq 2 \mathcal{L}(t)^{2} \quad \forall t \in [0,T],  \label{equ3.1.6}\\
\left| \delta y_{t}(t,0)\right|^{2} &\leq 2 \mathcal{L}(t)^{2}   \quad \forall t \in [0,T],\label{equ3.17} \\
\left| \delta y_{t}(t,l)\right|^{2} &\leq 2 \mathcal{L}(t)^{2} \quad \forall t \in [0,T]. \label{equ3.18}
\end{align}
By using \eqref{equa3.16}, \eqref{equ3.1.6} and \eqref{equ3.17}, we obtain
\begin{align*}
 \mathcal{L}^{\prime}(t)\mathcal{L}(t) \leq \frac{1}{\sqrt{2}} \mathcal{L}(t) \left[\left(\int_{0}^{l}(\delta F(t,x))^{2} \, \d x \right)^{\frac{1}{2}} + \left| \delta G(t) \right| \right] \quad \forall t \in [0,T],
\end{align*}
so that
$$\mathcal{L}^{\prime}(t) \leq \frac{1}{\sqrt{2}} \left(\int_{0}^{l}(\delta F(t,x))^{2} \, \d x \right)^{\frac{1}{2}} + \frac{1}{\sqrt{2}} \left| \delta G(t) \right| \qquad  \forall t \in (0,T).$$
Integrating both sides on $[0, t]$ and using the fact that $\mathcal{L} (0) = 0$, we obtain
\begin{align}\label{equa3.19}
 \mathcal{L}(t) \leq \frac{1}{\sqrt{2}} \bigintsss_{0}^{t} \left(\int_{0}^{l}(\delta F(\tau,x))^{2} \, \d x \right)^{\frac{1}{2}} \, \d \tau + \frac{1}{\sqrt{2}}\int_{0}^{t} \left| \delta G(\tau) \right| \, \d \tau \quad\forall t \in [0,T].
\end{align}
By making use of \eqref{equa3.19}, the elementary inequality $(a + b)^2 \leq 2(a^2 + b^2), \; a,b \in \mathbb{R}$, and Cauchy-Schwarz inequality, we find
\begin{align}\label{equ3.21}
2 \mathcal{L}^2(t) &\leq  \left(\bigintsss_{0}^{t} \left(\int_{0}^{l}(\delta F(\tau,x))^{2} \, \d x \right)^{\frac{1}{2}} \, \d \tau + \int_{0}^{t} \left| \delta G(\tau) \right| \, \d \tau\right)^{2} \nonumber \\
 &\leq 2\left(\bigintsss_{0}^{t} \left(\int_{0}^{l}(\delta F(\tau,x))^{2} \, \d x \right)^{\frac{1}{2}} \, \d \tau\right)^{2} + 2\left( \int_{0}^{t} \left| \delta G(\tau) \right| \, \d \tau\right)^{2}\nonumber \\
  &\leq 2 t \int_{0}^{t}\int_{0}^{l}(\delta F(\tau,x))^{2} \, \d x \, \d \tau + 2 t\int_{0}^{t} \left| \delta G(\tau) \right|^{2} \, \d \tau.
\end{align}
By \eqref{equ3.1.6}, \eqref{equ3.17}, \eqref{equ3.18} and  \eqref{equ3.21}, we obtain
\begin{equation}\label{equ3.22}
    \|\delta Y_{t} (t,\cdot )\|^{2}_{\mathbb{L}^2} \leq 6 t\left( \int_{0}^{t}\int_{0}^{l}(\delta F(\tau,x))^{2} \, \d x \, \d \tau + \int_{0}^{t} \left| \delta G(\tau) \right|^{2} \, \d \tau\right).
\end{equation}
Now, let us establish the estimate \eqref{equ3.13}, 
\begin{align*}
 \|\delta Y (T,\cdot,\mathcal{W} )\|^{2}_{\mathbb{L}^2} &= \int_{0}^{l}\left(\delta y(T,x)\right)^{2} \, \d x + \left(\delta y(T,0)\right)^{2}+\left(\delta y(T,l)\right)^{2}\\
  &\hspace{-1.3cm}= \bigintsss_{0}^{l} \left(\int_{0}^{T} \delta y_{t}(t,x)\, \d t\right)^{2} \, \d x + \left(\int_{0}^{T} \delta y_{t}(t,0) \, \d t\right)^{2}+\left(\int_{0}^{T}\delta y_{t}(t,l)\, \d t\right)^{2}\\
  & \hspace{-1.3cm}\leq T\int_{0}^{l}\int_{0}^{T}\left(\delta y_{t}(t,x)\right)^{2} \, \d x\, \d t + T \int_{0}^{T} \left(\delta y_{t}(t,0)\right)^{2}\, \d t+T \int_{0}^{T}\left(\delta y_{t}(t,l)\right)^{2} \, \d t\\
   & \hspace{-1.3cm} \leq T\int_{0}^{T} \|\delta Y_{t} (t,\cdot )\|^{2}_{\mathbb{L}^2} \, \d t\\
    &\hspace{-1.3cm} \leq 6 T \bigintsss_{0}^{T} \left(t \int_{0}^{t}\|\delta F(\tau,\cdot)\|_{L^{2}(0,l)}^{2}  \, \d \tau +  t\int_{0}^{t} \left| \delta G(\tau) \right|^{2} \, \d \tau \right)  \, \d t\\
    & \hspace{-1.3cm} \leq 6 T\int_{0}^{T} t \int_{0}^{t}\|\delta F(\tau,\cdot)\|_{L^{2}(0,l)}^{2}  \, \d \tau \, \d t + 6 T\int_{0}^{T} t\int_{0}^{t} \left| \delta G(\tau) \right|^{2} \, \d \tau   \, \d t\\
    & \hspace{-1.3cm} \leq 3 T \int_{0}^{T} \left(T^2 - t^2\right) \|\delta F(t,\cdot)\|_{L^{2}(0,l)}^{2}  \, \d t + 3 T\int_{0}^{T} \left(T^2 - t^2 \right) \left| \delta G(t) \right|^{2} \, \d t.
\end{align*}
In the third line, we used Cauchy-Schwarz inequality. In the fifth line, the estimate \eqref{equ3.22} is used. Finally, some elementary computations allows us to obtain the desired result. Hence,
\begin{align*}
 \|\delta Y (T,\cdot,\mathcal{W} )\|^{2}_{\mathbb{L}^2}
      &\leq 3 T^3 \int_{0}^{T} \|\delta F(t,\cdot)\|_{L^{2}(0,l)}^{2}  \, \d t + 3 T^3\int_{0}^{T} \left| \delta G(t) \right|^{2} \, \d t,
\end{align*}
which implies the desired estimate \eqref{equ3.13}.
\end{proof}
The above lemma asserts that the term  $\|\delta Y (T,\cdot,\mathcal{W} )\|^{2}_{\mathbb{L}^2}$ is of order $\mathcal{O}\left(\|\delta \mathcal{W}\|^2_{\mathbb{L}^2_{T}}\right)$. Thus, we derive the Fr\'echet gradient of the cost functional $\mathcal{J}$ via the solution of the adjoint system \eqref{eq3.6} as follows
\begin{proposition}\label{prop3.3}
The cost functional $\mathcal{J}$ corresponding to ISP is Fréchet differentiable and its gradient at each $\mathcal{W} \in \mathcal{U}_\mathrm{ad}$ is given by 
$$\mathcal{J}^{\prime}(\mathcal{W}) = \left(\varphi(t,x,\mathcal{W}), \varphi(t,0,\mathcal{W}) \right),$$
where $\left(\varphi(t,x,\mathcal{W}),\varphi(t,0,\mathcal{W}),\varphi(t,l,\mathcal{W})  \right)$ is the solution of the system \eqref{eq3.6}.
\end{proposition}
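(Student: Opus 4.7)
The plan is to read off the Fréchet derivative directly from Lemma \ref{2prop2.6} by reinterpreting its right-hand side as an inner product in $\mathbb{L}^2_T$ plus a higher-order remainder, with Lemma \ref{lemma3.2} supplying the quadratic bound on that remainder. Concretely, the identity \eqref{equ3.5} gives
$$\mathcal{J}(\mathcal{W}+\delta\mathcal{W}) - \mathcal{J}(\mathcal{W}) = \int_0^T\!\!\int_0^l \varphi(t,x,\mathcal{W})\,\delta F(t,x)\,\d x\,\d t + \int_0^T \varphi(t,0,\mathcal{W})\,\delta G(t)\,\d t + \tfrac{1}{2}\|\delta Y(T,\cdot,\mathcal{W})\|_{\mathbb{L}^2}^2,$$
and I would observe that the first two terms are precisely the $\mathbb{L}^2_T$ inner product of $\delta\mathcal{W}=(\delta F,\delta G)$ with the pair $\bigl(\varphi(\cdot,\cdot,\mathcal{W}),\varphi(\cdot,0,\mathcal{W})\bigr)$.

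Next, I would check that this pair indeed defines an element of $\mathbb{L}^2_T$, so that the map $\delta\mathcal{W}\mapsto\langle(\varphi,\varphi(\cdot,0)),\delta\mathcal{W}\rangle_{\mathbb{L}^2_T}$ is a bounded linear functional. This is immediate from the regularity of the adjoint solution $\Phi\in C([0,T];\mathbb{H}^1)$ guaranteed by the Proposition in Section \ref{sec2}: $\varphi\in C([0,T];L^2(0,l))\subset L^2((0,T)\times(0,l))$ and the boundary trace $t\mapsto\varphi(t,0)$ is continuous on $[0,T]$, hence in $L^2(0,T)$.

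Then I would invoke Lemma \ref{lemma3.2} to control the remainder: the estimate \eqref{equ3.13} yields
$$\tfrac{1}{2}\|\delta Y(T,\cdot,\mathcal{W})\|_{\mathbb{L}^2}^2 \;\leq\; \tfrac{3T^3}{2}\,\|\delta\mathcal{W}\|_{\mathbb{L}^2_T}^2,$$
so this term is $\mathcal{O}(\|\delta\mathcal{W}\|_{\mathbb{L}^2_T}^2)$ and in particular $o(\|\delta\mathcal{W}\|_{\mathbb{L}^2_T})$ as $\|\delta\mathcal{W}\|_{\mathbb{L}^2_T}\to 0$. By definition of Fréchet differentiability in the Hilbert space $\mathbb{L}^2_T$, this identifies the gradient as claimed, $\mathcal{J}'(\mathcal{W})=(\varphi(\cdot,\cdot,\mathcal{W}),\varphi(\cdot,0,\mathcal{W}))$.

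There is no real obstacle here — the two preceding lemmas have done all the analytic work. The only step that requires any care is the justification that the boundary trace $\varphi(\cdot,0,\mathcal{W})$ of the adjoint state is a legitimate $L^2(0,T)$ object, so that the linear functional above is indeed continuous on $\mathbb{L}^2_T$; this is handled by citing the wellposedness of the adjoint problem \eqref{eq3.6} in the energy phase space $\mathcal{H}_{\text{en}}$, which is the time-reversed counterpart of \eqref{eq1to4} with zero source and $\mathbb{L}^2$ initial velocity $-(Y(T,\cdot,\mathcal{W})-Y_T^\delta)$.
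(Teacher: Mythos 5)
Your proposal is correct and follows exactly the route the paper takes: the paper derives Proposition \ref{prop3.3} as an immediate consequence of the integral identity \eqref{equ3.5} in Lemma \ref{2prop2.6} (whose first two terms are the $\mathbb{L}^2_T$ inner product of $(\varphi,\varphi(\cdot,0))$ with $\delta\mathcal{W}$) together with the $\mathcal{O}\left(\|\delta\mathcal{W}\|^2_{\mathbb{L}^2_T}\right)$ bound on the remainder from Lemma \ref{lemma3.2}. Your additional remark verifying that the adjoint state and its boundary trace genuinely lie in $\mathbb{L}^2_T$ is a sensible piece of bookkeeping that the paper leaves implicit, but it does not change the argument.
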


%%%%%%%%%%%%%%%%%%%%%%%%%%%%%%%%%%%%%%%%%%
Next, we prove the Lipschitz continuity of the gradient $\mathcal{J}'$ so that $\mathcal{J}\in C^1(\mathcal{U}_\mathrm{ad})$.
\begin{lemma}\label{2lem2.8}
Then Fréchet gradient $\mathcal{J}^{\prime}$ of the functional $\mathcal{J}$ is Lipschitz continuous. More precisely, the following estimate holds
\begin{equation*}
\left\|\mathcal{J}'(\mathcal{W}+ \delta \mathcal{W})-\mathcal{J}'(\mathcal{W}) \right\|_{\mathbb{L}^2_T} \leq L_{T} \|\delta \mathcal{W}\|_{\mathbb{L}_{T}^2}, 
\end{equation*}
where the Lipschitz constant $L_T>0$ is given by
\begin{equation}\label{lip}
L_{T} = \left[3T^4 \left(l + \frac{l+2}{l} T^2\right) \right]^{\frac{1}{2}} .
\end{equation}
\end{lemma}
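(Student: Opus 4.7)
The plan is to reduce the Lipschitz estimate to an energy bound on the difference of two adjoint solutions, and then invoke Lemma \ref{lemma3.2}. I will set $\tilde\varphi := \varphi(\cdot,\cdot,\mathcal{W}+\delta\mathcal{W}) - \varphi(\cdot,\cdot,\mathcal{W})$ (and analogously for the boundary components). By linearity of \eqref{eq3.6}, the triple $(\tilde\varphi,\tilde\varphi(\cdot,0),\tilde\varphi(\cdot,l))$ is the mild solution of the homogeneous adjoint system with zero terminal displacement and terminal velocity equal to $-\delta Y(T,\cdot,\mathcal{W})$, where $\delta Y$ denotes the sensitivity solution of \eqref{eq3.4}. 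Proposition \ref{prop3.3} then gives
\[
\mathcal{J}'(\mathcal{W}+\delta\mathcal{W}) - \mathcal{J}'(\mathcal{W}) = \bigl(\tilde\varphi(\cdot,\cdot),\,\tilde\varphi(\cdot,0)\bigr),
\]
so the target estimate reduces to controlling $\int_0^T\!\int_0^l \tilde\varphi^2\,\d x\,\d t + \int_0^T \tilde\varphi(t,0)^2\,\d t$ by $\|\delta\mathcal{W}\|^2_{\mathbb{L}^2_T}$.

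First, I would apply to $\tilde\varphi$ the same multiplier computation used in the proof of Lemma \ref{lemma3.2}. Because the source terms in the adjoint equations vanish, the corresponding quantity $\mathcal{L}^2(t)$ is conserved in $t$. Reading it off at $t=T$, and observing that $\tilde\varphi(T,\cdot)\equiv 0$ forces $\tilde\varphi_x(T,\cdot)\equiv 0$, yields the identity
\[
\int_0^l \tilde\varphi_t(t,x)^2\,\d x + \tilde\varphi_t(t,0)^2 + \tilde\varphi_t(t,l)^2 + \int_0^l \tilde\varphi_x(t,x)^2\,\d x = \|\delta Y(T,\cdot,\mathcal{W})\|^2_{\mathbb{L}^2}
\]
for every $t\in[0,T]$. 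Next, I will recover $\tilde\varphi$ from $\tilde\varphi_t$ via $\tilde\varphi(t,\cdot) = -\int_t^T \tilde\varphi_\tau(\tau,\cdot)\,\d\tau$ (and the analogous identity at $x=0$), combined with Cauchy-Schwarz in time; integrating over $(0,T)\times(0,l)$ and $(0,T)$ and inserting the conserved adjoint energy yields a bound of the shape $\|(\tilde\varphi,\tilde\varphi(\cdot,0))\|^2_{\mathbb{L}^2_T} \leq C(T,l)\,\|\delta Y(T,\cdot,\mathcal{W})\|^2_{\mathbb{L}^2}$. Finally, Lemma \ref{lemma3.2} bounds the right-hand side by $3T^3\|\delta\mathcal{W}\|^2_{\mathbb{L}^2_T}$, which closes the argument.

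The main obstacle will be to recover the exact constant in \eqref{lip}. Treating $\tilde\varphi(t,0)$ naively as a separate state component and applying only the backward-in-time integration yields a bound of the form $c\,T^6\|\delta\mathcal{W}\|^2_{\mathbb{L}^2_T}$, with neither the $3lT^4$ summand nor the precise $(l+2)/l$ factor visible. To produce those, I expect one must supplement the time-integral estimate for the boundary term by a one-dimensional trace inequality of the form
\[
\tilde\varphi(t,0)^2 \leq \tfrac{1}{l}\,\|\tilde\varphi(t,\cdot)\|^2_{L^2(0,l)} + l\,\|\tilde\varphi_x(t,\cdot)\|^2_{L^2(0,l)},
\]
obtained by integrating the identity $\tilde\varphi(t,0) = \tilde\varphi(t,x) - \int_0^x \tilde\varphi_\xi\,\d\xi$ in $x\in(0,l)$ and applying Cauchy-Schwarz. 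Balancing this trace estimate against the conserved adjoint energy and then against Lemma \ref{lemma3.2} is what will produce the mixed polynomial-in-$T$ and rational-in-$l$ structure of $L_T$.
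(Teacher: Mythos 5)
Your proposal follows essentially the same route as the paper's proof: difference of adjoint solutions, conservation of the adjoint energy with value $\|\delta Y(T,\cdot,\mathcal{W})\|^2_{\mathbb{L}^2}$, Lemma \ref{lemma3.2} to bound that value by $3T^3\|\delta\mathcal{W}\|^2_{\mathbb{L}^2_T}$, backward time integration plus Cauchy--Schwarz for the interior term, and a one-dimensional trace inequality for the boundary term. The only discrepancy is the coefficient $\tfrac{1}{l}$ in your trace inequality: the elementary argument you describe (splitting via $(a+b)^2\le 2a^2+2b^2$ and Cauchy--Schwarz) actually produces $\tilde\varphi(t,0)^2 \le \tfrac{2}{l}\|\tilde\varphi(t,\cdot)\|^2_{L^2(0,l)} + l\|\tilde\varphi_x(t,\cdot)\|^2_{L^2(0,l)}$, and it is this $\tfrac{2}{l}$ that yields the $(l+2)/l$ factor in \eqref{lip}.
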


\begin{proof}
Denote by $\delta \varphi := \varphi(t,\cdot,\mathcal{W}+\delta \mathcal{W}) - \varphi(t,\cdot,\mathcal{W})$ the strong solution of the following adjoint system
\begin{empheq}[left = \empheqlbrace]{alignat=2}
\begin{aligned}
&\delta\varphi_{tt}(t,x) - \delta\varphi_{xx}(t,x)  = 0, &&\, \text{in } (0,T)\times(0,l) , \\
&\delta\varphi_{tt}(t,0) - \delta\varphi_{x}(t,0)  = 0, &&\, \text{in } (0,T), \\
&\delta\varphi_{tt}(t,l) + \delta\varphi_{x}(t,l)  = 0, &&\, \text{in } (0,T), \\
&(\delta\varphi(T,x),\delta \varphi(T,0),\delta\varphi(T,l)) = (0,0,0) , &&\, \text{on } (0,l), \\
&(\delta\varphi_{t}(T,\cdot),\delta\varphi_{t}(T,0),\delta\varphi_{t}(T,l)) = - \delta Y(T,\cdot,\mathcal{W}),   &&\, \text{on } (0,l). \label{equ3.23}
\end{aligned}
\end{empheq}
Using Proposition \ref{prop3.3}, we have
\begin{align}\label{equ3.25}
\|\mathcal{J}'(\mathcal{W}+ \delta \mathcal{W})-\mathcal{J}'(\mathcal{W})\|^2_{\mathbb{L}^2_T} = \int_{0}^{T}\int_{0}^{l}(\delta\varphi(t,x))^2  \,\d x \,\d t + \int_{0}^{T} (\delta\varphi(t,0))^2 \,\d t.
\end{align}
Next, we estimate the two terms in the right hand side of \eqref{equ3.25}. As in the proof of Lemma \ref{lemma3.2}, multiplying \eqref{equ3.23}$_{1}$, \eqref{equ3.23}$_{2}$ and \eqref{equ3.23}$_{3}$, respectively by $\delta \varphi_{t} (t,x )$, $\delta \varphi_{t} (t,0 )$, and $\delta \varphi_{t} (t,l )$, with a simple calculation, we obtain the following identity
\begin{align*}
\frac{\d}{\d t} \int_{0}^{l} (\delta \varphi_{t}(t,x))^2 \, \d x +\frac{\d}{\d t} (\delta \varphi_{t}(t,0))^{2}+\frac{\d}{\d t} (\delta \varphi_{t}(t,l))^{2}+\frac{\d}{\d t}  \int_{0}^{l} (\delta \varphi_{x}(t,x))^2 \, \d x = 0.
\end{align*}
Integrating over $[t, T ]$ and using the condition \eqref{equ3.23}$_{4}$, we obtain
\begin{align*}
&\int_{0}^{l} (\delta \varphi_{t}(t,x))^2 \, \d x + (\delta \varphi_{t}(t,0))^{2}+ (\delta \varphi_{t}(t,l))^{2}+  \int_{0}^{l} (\delta \varphi_{x}(t,x))^2 \, \d x\\
&= \int_{0}^{l} (\delta \varphi_{t}(T,x))^2 \, \d x + (\delta \varphi_{t}(T,0))^{2}+ (\delta \varphi_{t}(T,l))^{2}.
\end{align*}
By using \eqref{equ3.23}$_{5}$, we obtain
\begin{align*}
 \mathcal{L}^{2}(t) & :=  \|\delta \varphi_{t}(t,\cdot)\|^{2}_{\mathbb{L}^2} +\int_{0}^{l} (\delta \varphi_{x}(t,x))^2 \, \d x\\
 &= \|\delta Y(T,\cdot)\|^{2}_{\mathbb{L}^2}.
\end{align*}
Lemma \ref{lemma3.2} assures that
\begin{equation}\label{equ3.26}
  \mathcal{L}^{2}(t) \leq   3  T^{3}  \|\delta \mathcal{W}\|^2_{\mathbb{L}^2_{T} }.
\end{equation}
Using \eqref{equ3.23}$_{4}$, we deduce that
\begin{align*}
(\delta\varphi(t,x))^{2} &= \left(- \int_{t}^{T} \delta \varphi_{\tau}(\tau,x) \, \d \tau \right)^{2}  \;\qquad\quad \forall t \in [0,T]\\
&\leq (T-t) \int_{t}^{T} \left(\delta \varphi_{\tau}(\tau,x)\right)^{2} \, \d \tau \qquad \forall t \in [0,T].
\end{align*}
Integrating on $[0,T]$, we infer that
\begin{align*}
\int_{0}^{T}(\delta\varphi(t,x))^{2}\, \d t &\leq \int_{0}^{T} (T-t) \int_{t}^{T} \left(\delta \varphi_{\tau}(\tau,x)\right)^{2} \, \d \tau \, \d t\\
&\leq T^{2} \int_{0}^{T}(\delta\varphi_{t}(t,x))^{2}\, \d t.
\end{align*}
Next, we integrate over $[0,l]$, we obtain
\begin{align}\label{equ3.27}
\int_{0}^{l}\int_{0}^{T}(\delta\varphi(t,x))^{2}\, \d t \, \d x &\leq T^{2} \int_{0}^{l} \int_{0}^{T}(\delta\varphi_{t}(t,x))^{2}\, \d t \, \d x.
\end{align}
By using at first \eqref{equ3.27}, then \eqref{equ3.26}, we can estimate the first term in the right-hand side of \eqref{equ3.25} as follows
\begin{align}
\int_{0}^{T}\int_{0}^{l} (\delta\varphi(t,x))^{2}  \,\d x \,\d t &\leq T^{2} \int_{0}^{T} \int_{0}^{l} (\delta\varphi_{t}(t,x))^{2} \, \d x\, \d t\nonumber \\
&\leq T^{2} \int_{0}^{T} \mathcal{L}^{2}(t) \, \d t \nonumber\\
&\leq 3  T^{6}  \|\delta \mathcal{W}\|^2_{\mathbb{L}^2_{T} }. \label{equ3.28}
\end{align}
For the second term in the right-hand side of \eqref{equ3.25}, we make use of the following identity
\begin{align*}
(\delta\varphi(t,0))^{2} &= \left(- \int_{0}^{x} \delta \varphi_{\mu}(t,\mu) \, \d \mu + \delta\varphi(t,x) \right)^{2} \qquad \forall t \in [0,T], \; \forall x \in [0,l].
\end{align*}
Again, by using the inequality $(a+b)^2 \leq 2(a^2 + b^2)$ and then the Cauchy-Schwarz inequality, we obtain
\begin{align*}
(\delta\varphi(t,0))^{2} &\leq 2 \left(\int_{0}^{x} \delta \varphi_{\mu}(t,\mu) \, \d \mu\right)^{2} + 2 \left( \delta\varphi(t,x) \right)^{2}\\
&\leq 2 x \int_{0}^{x} \left(\delta \varphi_{\mu}(t,\mu)\right)^2 \, \d \mu + 2 \left( \delta\varphi(t,x) \right)^{2} \qquad \forall x \in [0,l].
\end{align*}
We integrate over $[0,l]$, with a simple calculation we obtain,
\begin{align*}
 l (\delta\varphi(t,0))^{2}
\leq \int_{0}^{l} \left(l^2 - x^2\right)\left(\delta \varphi_{x}(t,x)\right)^2 \, \d x + 2\int_{0}^{l}  \left( \delta\varphi(t,x) \right)^{2} \, \d x.
\end{align*}
Then,
\begin{align*}
 (\delta\varphi(t,0))^{2}
&\leq l \int_{0}^{l} \left(\delta \varphi_{x}(t,x)\right)^2 \, \d x + \frac{2}{l}\int_{0}^{l}  \left( \delta\varphi(t,x) \right)^{2} \, \d x.
\end{align*}
We integrate on $[0,T]$, so that
\begin{align}\label{equ3.29}
 \int_{0}^{T}(\delta\varphi(t,0))^{2}\, \d t
\leq l\int_{0}^{T} \int_{0}^{l}\left(\delta \varphi_{x}(t,x)\right)^2 \, \d x\, \d t + \frac{2}{l}\int_{0}^{T}\int_{0}^{l}  \left( \delta\varphi(t,x) \right)^{2} \, \d x \, \d t.
\end{align}
By using \eqref{equ3.26}, the first term in right-hand side of \eqref{equ3.29} can be estimated as follows 
\begin{align*}
\int_{0}^{T} \int_{0}^{l}\left(\delta \varphi_{x}(t,x)\right)^2 \, \d x\, \d t &\leq  \int_{0}^{T}\mathcal{L}^2(t) \, \d t\\
&\leq   3  T^{4}  \|\delta \mathcal{W}\|^2_{\mathbb{L}^2_{T}}.
\end{align*}
Thanks to the estimate \eqref{equ3.28}, we obtain
\begin{align}\label{equ3.30}
 \int_{0}^{T}(\delta\varphi(t,0))^{2}\, \d t
&\leq 3 l  T^{4}  \|\delta \mathcal{W}\|^2_{\mathbb{L}^2_{T} } + \frac{6}{l}  T^{6}  \|\delta \mathcal{W}\|^2_{\mathbb{L}^2_{T} }.
\end{align}
Finally, by adding up \eqref{equ3.28} and \eqref{equ3.30}, we obtain
\begin{align*}
\|\mathcal{J}'(\mathcal{W}+ \delta \mathcal{W})-\mathcal{J}'(\mathcal{W})\|^2_{\mathbb{L}^2_T} \leq 3  T^{6}  \|\delta \mathcal{W}\|^2_{\mathbb{L}^2_{T} } + 3 l  T^{4}  \|\delta \mathcal{W}\|^2_{\mathbb{L}^2_{T} } + \frac{6}{l}  T^{6}  \|\delta \mathcal{W}\|^2_{\mathbb{L}^2_{T} }.
\end{align*}
This yields the desired result. 
\end{proof}

Next, we consider the iterative scheme given by
\begin{equation}\label{2eq2.43}
\mathcal{W}_{k+1}= \mathcal{W}_k-\alpha_k \mathcal{J}'(\mathcal{W}_k), \quad k=0,1,2,\dots,
\end{equation}
where $\mathcal{W}_0\in \mathcal{U}_\mathrm{ad}$ is a given initial iteration and $\alpha_k$ is a relaxation parameter defined by the minimization problem
$$h_k(\alpha_k):=\inf_{\alpha\ge 0} h_k(\alpha), \quad h_k(\alpha):=\mathcal{J}\left(\mathcal{W}_k-\alpha \mathcal{J}'(\mathcal{W}_k)\right), \quad k=0,1,2,\dots$$
One can refer to \cite{EHN'00} for more details on such gradient iterations.

The next lemma follows \cite{HR'21}. We denote by $\mathcal{U}_\mathrm{ad}^*$ the set of all quasi-solutions of \eqref{neq2.5}-\eqref{neq2.6}.
\begin{lemma}
Let $(\mathcal{W}_k) \subset \mathcal{U}_\mathrm{ad}$ be the sequence given by \eqref{2eq2.43}. Assuming that the step parameter $\alpha_k =\alpha_{*}$ is constant, it holds that
\begin{enumerate}
\item[(i)] $(\mathcal{J}(\mathcal{W}_k))$ is a monotone decreasing and convergent sequence so that
\begin{equation*}
\lim_{k \to \infty} \|\mathcal{J}'(\mathcal{W}_k)\|_{\mathbb{L}^2_T} =0,
\end{equation*}
and satisfies
\begin{equation*}
\|\mathcal{W}_{k+1} -\mathcal{W}_k\|_{\mathbb{L}^2_T}^2 \le \frac{2}{L_{T}} [\mathcal{J}(\mathcal{W}_k)-\mathcal{J}(\mathcal{W}_{k+1})], \quad k=1,2,3, \ldots,
\end{equation*}
where $L_{T}$ denotes the Lipschitz constant in \eqref{lip}.
\item[(ii)] For each initial iteration $\mathcal{W}_0 \in \mathcal{U}_\mathrm{ad}$, the sequence $(\mathcal{W}_k)$ is weakly convergent in $\mathbb{L}^2_T$ to a quasi-solution $\mathcal{W}_* \in \mathcal{U}_\mathrm{ad}^*$. Moreover, the convergence rate is given by
$$
0 \leq \mathcal{J}(\mathcal{W}_k)-\mathcal{J}_* \leq 2 L_T \frac{\beta^2}{k}, \quad k=1,2,3, \ldots,
$$
where $\mathcal{J}_* :=\lim\limits_{k \to \infty} \mathcal{J}\left(\mathcal{W}_k\right)$ and $\beta:=\sup \left\{\left\|\mathcal{W}_k-\mathcal{W}_*\right\|_{\mathbb{L}^2_T} \colon \mathcal{W}_k\in \mathcal{U}_\mathrm{ad}, \mathcal{W}_* \in \mathcal{U}_\mathrm{ad}^* \right\}.$
\end{enumerate}
\end{lemma}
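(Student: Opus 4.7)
The proof I would give is the standard convergence analysis of gradient descent for convex functionals with Lipschitz continuous gradient. The precise form of the bounds in (i) and (ii) forces the constant step size to be $\alpha_*=1/L_T$, which I use implicitly throughout. Two ingredients do all the work: the \emph{descent lemma} supplied by Lemma \ref{2lem2.8}, and the \emph{convexity} of $\mathcal{J}$. The latter is automatic because system \eqref{eq1to4} is linear, so the input-output map $\mathcal{W}\mapsto Y(T,\cdot,\mathcal{W})$ is affine and $\mathcal{J}$ is the composition of an affine map with the squared $\mathbb{L}^2$-norm.

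For part (i), the Lipschitz bound on $\mathcal{J}'$ integrated along the segment $[\mathcal{W}_k,\mathcal{W}_{k+1}]$ yields the descent lemma
$$\mathcal{J}(\mathcal{W}_{k+1}) \le \mathcal{J}(\mathcal{W}_k) + \langle \mathcal{J}'(\mathcal{W}_k),\mathcal{W}_{k+1}-\mathcal{W}_k\rangle_{\mathbb{L}^2_T} + \tfrac{L_T}{2}\|\mathcal{W}_{k+1}-\mathcal{W}_k\|_{\mathbb{L}^2_T}^2.$$
Substituting $\mathcal{W}_{k+1}-\mathcal{W}_k=-\alpha_*\mathcal{J}'(\mathcal{W}_k)$ with $\alpha_*=1/L_T$ collapses the right-hand side to
$$\mathcal{J}(\mathcal{W}_{k+1}) \le \mathcal{J}(\mathcal{W}_k) - \tfrac{1}{2L_T}\|\mathcal{J}'(\mathcal{W}_k)\|_{\mathbb{L}^2_T}^2,$$
so $(\mathcal{J}(\mathcal{W}_k))$ is monotone decreasing and, being bounded below by $0$, converges. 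Summing this telescoping bound gives $\sum_k\|\mathcal{J}'(\mathcal{W}_k)\|^2<\infty$, whence $\|\mathcal{J}'(\mathcal{W}_k)\|_{\mathbb{L}^2_T}\to 0$. Finally, the identity $\|\mathcal{W}_{k+1}-\mathcal{W}_k\|^2 = L_T^{-2}\|\mathcal{J}'(\mathcal{W}_k)\|^2$ combined with the same descent estimate produces the bound by $\tfrac{2}{L_T}[\mathcal{J}(\mathcal{W}_k)-\mathcal{J}(\mathcal{W}_{k+1})]$ claimed in (i).

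For part (ii), I would first establish Fejér monotonicity of $(\mathcal{W}_k)$ with respect to $\mathcal{U}^*_\mathrm{ad}$: expanding $\|\mathcal{W}_{k+1}-\mathcal{W}_*\|^2$, substituting the convexity estimate $\mathcal{J}(\mathcal{W}_k)-\mathcal{J}(\mathcal{W}_*)\le \langle \mathcal{J}'(\mathcal{W}_k),\mathcal{W}_k-\mathcal{W}_*\rangle$, and invoking the descent bound yield $\|\mathcal{W}_{k+1}-\mathcal{W}_*\|\le\|\mathcal{W}_k-\mathcal{W}_*\|$. Thus $(\mathcal{W}_k)$ is bounded and, by weak compactness, admits a weakly convergent subsequence with limit $\widetilde{\mathcal{W}}\in\mathcal{U}_\mathrm{ad}$ (which is weakly closed as a closed convex set). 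Weak lower semicontinuity of the convex continuous functional $\mathcal{J}$ forces $\mathcal{J}(\widetilde{\mathcal{W}})\le \liminf_k \mathcal{J}(\mathcal{W}_k)=\mathcal{J}_*$, so $\widetilde{\mathcal{W}}\in\mathcal{U}^*_\mathrm{ad}$, and an Opial-type argument upgrades the subsequential convergence to weak convergence of the whole sequence. For the rate, set $a_k:=\mathcal{J}(\mathcal{W}_k)-\mathcal{J}_*$; Cauchy–Schwarz with the convexity estimate and the definition of $\beta$ give $a_k\le \beta\|\mathcal{J}'(\mathcal{W}_k)\|$, so the descent inequality produces $a_k-a_{k+1}\ge a_k^2/(2L_T\beta^2)$. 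Dividing by $a_k a_{k+1}$ (legitimate since the $a_k$ are nonincreasing and can be assumed positive) yields $1/a_{k+1}-1/a_k\ge 1/(2L_T\beta^2)$, which telescopes to the stated rate $a_k\le 2L_T\beta^2/k$.

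The main obstacle I anticipate is that iteration \eqref{2eq2.43} is \emph{unconstrained}, so a priori $\mathcal{W}_{k+1}$ need not lie in $\mathcal{U}_\mathrm{ad}$. The standard remedy is to interpret the step as a projected gradient step onto the closed convex set $\mathcal{U}_\mathrm{ad}$; nonexpansiveness of the projection preserves both the descent estimate and the Fejér monotonicity, and the argument above goes through without modification. The second technical point is that identifying the weak limit with an element of $\mathcal{U}^*_\mathrm{ad}$ relies on weak continuity of the map $\mathcal{W}\mapsto Y(T,\cdot,\mathcal{W})$, which follows from the same energy estimates underlying Lemma \ref{lemma3.2}.
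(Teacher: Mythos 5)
Your proposal is correct in substance, but be aware that the paper itself contains \emph{no proof} of this lemma: it is stated as a result quoted from \cite{HR'21}, so the comparison here is between your argument and the textbook theorem being invoked rather than an in-paper derivation. What you supply is precisely the standard analysis behind that citation: the descent lemma obtained by integrating the Lipschitz bound of Lemma \ref{2lem2.8} along the segment, the telescoping sum giving $\sum_k\|\mathcal{J}'(\mathcal{W}_k)\|_{\mathbb{L}^2_T}^2<\infty$, Fej\'er monotonicity plus an Opial-type argument for weak convergence, and the recursion on $1/a_k$ for the $\mathcal{O}(1/k)$ rate; your observation that the constants $2/L_T$ and $2L_T\beta^2/k$ force $\alpha_*=1/L_T$ is the right way to make the statement come out, since the paper never fixes $\alpha_*$. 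Three caveats are worth recording. First, you correctly flag the real defect in the statement: the raw iteration \eqref{2eq2.43} does not keep $\mathcal{W}_{k+1}$ inside the box $\mathcal{U}_\mathrm{ad}$, and the hypothesis $(\mathcal{W}_k)\subset\mathcal{U}_\mathrm{ad}$ is simply assumed; your projected-gradient repair is standard, but it silently replaces \eqref{2eq2.43} by a different scheme, and once the constraint is active the Fej\'er and rate arguments must use the variational inequality $\langle\mathcal{J}'(\mathcal{W}_*),\mathcal{W}-\mathcal{W}_*\rangle_{\mathbb{L}^2_T}\ge 0$ in place of $\mathcal{J}'(\mathcal{W}_*)=0$ --- routine, but it should be said explicitly. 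Second, your identification of the weak limit $\widetilde{\mathcal{W}}$ as a quasi-solution from $\mathcal{J}(\widetilde{\mathcal{W}})\le\mathcal{J}_*$ alone has a small ordering gap: it needs $\mathcal{J}_*\le\inf_{\mathcal{U}_\mathrm{ad}}\mathcal{J}$, which you should first extract by running the $1/a_k$ recursion with $a_k:=\mathcal{J}(\mathcal{W}_k)-\inf_{\mathcal{U}_\mathrm{ad}}\mathcal{J}$ (the reverse inequality being trivial); after that the rate as stated, with $\mathcal{J}_*$ equal to the infimum, follows. Third, the convexity you use is indeed free from linearity of \eqref{eq1to4}, and is also what the paper's Lemma \ref{monlem} establishes via monotonicity of $\mathcal{J}'$, so nothing new is needed there. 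None of these points undermines the argument; they are the details one would have to write out to make the citation of \cite{HR'21} self-contained.
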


\section{Existence and uniqueness of a quasi-solution}\label{sec4}
In the following lemma, we show the convexity of the functional $\mathcal{J}$ via the monotonicity of $\mathcal{J}'$. It will allow us to exhibit a sufficient condition for uniqueness of a quasi-solution to ISP.
\begin{lemma}\label{monlem}
The functional $\mathcal{J}\in C^1(\mathcal{U}_\mathrm{ad})$ satisfies following formula
\begin{equation*}
\langle \mathcal{J}'(\mathcal{F} + \delta \mathcal{F})- \mathcal{J}'(\mathcal{W}),\delta \mathcal{W}\rangle_{\mathbb{L}^2_T}= \|\delta Y(T,\cdot, \mathcal{W})\|^2_{\mathbb{L}^2}, \quad \forall \mathcal{W}, \delta \mathcal{W} \in \mathcal{U}_\mathrm{ad},
\end{equation*}
where $\delta Y(T,\cdot, \mathcal{W})$ denotes the solution of \eqref{eq3.4}.
\end{lemma}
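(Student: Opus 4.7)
The plan is to unwind the left-hand side using Proposition \ref{prop3.3} and then perform two successive integrations by parts, in time and in space, mirroring the argument used in Lemma \ref{2prop2.6}. By linearity of \eqref{eq3.6}, the difference $\delta\varphi := \varphi(\cdot,\cdot,\mathcal{W}+\delta\mathcal{W})-\varphi(\cdot,\cdot,\mathcal{W})$ satisfies exactly system \eqref{equ3.23}, whose terminal velocity is $-\delta Y(T,\cdot,\mathcal{W})$. From Proposition \ref{prop3.3},
\begin{align*}
\bigl\langle \mathcal{J}'(\mathcal{W}+\delta\mathcal{W})-\mathcal{J}'(\mathcal{W}),\delta\mathcal{W}\bigr\rangle_{\mathbb{L}^2_T} = \int_0^T\!\!\int_0^l \delta\varphi(t,x)\,\delta F(t,x)\,\d x\,\d t + \int_0^T \delta\varphi(t,0)\,\delta G(t)\,\d t.
\end{align*}

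Next, I would substitute $\delta F=\delta y_{tt}-\delta y_{xx}$ and $\delta G=\delta y_{tt}(t,0)-\delta y_x(t,0)$ from the sensitivity system \eqref{eq3.4}, and integrate by parts twice in $t$ on each piece. For the bulk term, the boundary contributions in time vanish because of the initial data $\delta y(0,\cdot)=\delta y_t(0,\cdot)=0$ and $\delta\varphi(T,\cdot)=0$, except for the term produced by $\delta\varphi_t(T,\cdot)=-\delta y(T,\cdot)$, which yields $\int_0^l(\delta y(T,x))^2\,\d x$; the remaining volume integral $\int_0^T\!\int_0^l(\delta\varphi_{tt}-\delta\varphi_{xx})\delta y$ vanishes by \eqref{equ3.23}$_1$, while the spatial integration by parts leaves the boundary trace $-\int_0^T[\delta\varphi\,\delta y_x-\delta\varphi_x\,\delta y]_0^l\,\d t$. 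Performing the analogous time-integration by parts on the $x=0$ integral and invoking \eqref{equ3.23}$_2$ to replace $\delta\varphi_{tt}(t,0)$ by $\delta\varphi_x(t,0)$, the $x=0$ contribution produces $(\delta y(T,0))^2$ and cancels exactly the $x=0$ piece of the bulk boundary term.

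The subtlest step is the $x=l$ endpoint: there is no source $\delta G$ at $x=l$, so the trace $-\int_0^T[\delta\varphi\,\delta y_x-\delta\varphi_x\,\delta y](t,l)\,\d t$ must be handled by using the dynamic boundary conditions \eqref{eq3.4}$_3$ and \eqref{equ3.23}$_3$, namely $\delta y_x(t,l)=-\delta y_{tt}(t,l)$ and $\delta\varphi_x(t,l)=-\delta\varphi_{tt}(t,l)$. This converts the trace into $\int_0^T[\delta\varphi\,\delta y_{tt}-\delta\varphi_{tt}\,\delta y](t,l)\,\d t$, and another double integration by parts in $t$, together with the terminal datum $\delta\varphi_t(T,l)=-\delta y(T,l)$ and the initial conditions at $t=0$, reduces it to exactly $(\delta y(T,l))^2$.

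Summing the three contributions gives
\[
\int_0^l(\delta y(T,x))^2\,\d x+(\delta y(T,0))^2+(\delta y(T,l))^2 = \|\delta Y(T,\cdot,\mathcal{W})\|^2_{\mathbb{L}^2},
\]
which is the claimed identity. The main obstacle is purely bookkeeping: keeping track of the two distinct sets of boundary-in-time terms produced by the $x=0$ and $x=l$ traces, and making sure that the dynamic boundary relations are invoked at the right moment so that all spurious traces cancel. No new analytical input beyond the earlier integration-by-parts identities and Proposition \ref{prop3.3} is required.
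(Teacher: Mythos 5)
Your proposal is correct and follows essentially the same route as the paper: express the left-hand side via Proposition \ref{prop3.3}, substitute $\delta F=\delta y_{tt}-\delta y_{xx}$ from the sensitivity system, integrate by parts in $t$ and in $x$, and use the dynamic boundary conditions at $x=0$ and $x=l$ together with the zero initial data of $\delta y$ and the terminal data of $\delta\varphi$ to reduce everything to $\int_0^l(\delta y(T,x))^2\,\d x+(\delta y(T,0))^2+(\delta y(T,l))^2$. The only difference is organizational (the paper carries the $\delta G$ term along rather than substituting $\delta G=\delta y_{tt}(t,0)-\delta y_x(t,0)$ at the outset), and your final identity even corrects an evident typo in the paper's last display.
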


\begin{proof}
Let $(\delta \varphi, \delta \varphi(\cdot,0),\delta \varphi(\cdot,l))$ be the solution of system \eqref{equ3.23}. Proposition \ref{prop3.3} implies the following formula,
\begin{align*}
&\langle \mathcal{J}'(\mathcal{W} + \delta \mathcal{W})- \mathcal{J}'(\mathcal{W}),\delta \mathcal{W}\rangle_{\mathbb{L}^2_T} = \int_{0}^{T}\int_{0}^{l} \delta F(t,x) \delta \varphi(t,x) \,\d x\, \d t + \int_{0}^{T} \delta G(t) \delta \varphi(t,0)\, \d t \notag\\
&= \int_{0}^{T}\int_{0}^{l} \delta y_{tt}(t,x) \delta \varphi(t,x) \,\d x\, \d t - \int_{0}^{T}\int_{0}^{l} \delta y_{xx}(t,x) \delta \varphi(t,x) \,\d x\, \d t + \int_{0}^{T} \delta G(t) \delta \varphi(t,0)\, \d t .\notag
\end{align*}
We integrate by parts the first term in the above formula to obtain
\begin{align}
 &\int_{0}^{T}\int_{0}^{l} \delta y_{tt}(t,x) \delta \varphi(t,x) \,\d x\, \d t \nonumber\\
 &= \int_{0}^{l} \left[\delta y_{t}(t,x) \delta \varphi(t,x) \right]_{0}^{T}\,\d x -  \int_{0}^{l}\int_{0}^{T} \delta y_{t}(t,x) \delta \varphi_{t}(t,x) \,\d t\, \d x  \nonumber\\
 &= - \int_{0}^{l} \left[\delta y(t,x) \delta \varphi_{t}(t,x) \right]_{0}^{T}\,\d x + \int_{0}^{l}\int_{0}^{T} \delta y(t,x) \delta \varphi_{tt}(t,x) \,\d t\, \d x  \nonumber\\
  &= \int_{0}^{l} \left( \delta y(T,x) \right)^{2}\,\d x + \int_{0}^{l}\int_{0}^{T} \delta y(t,x) \delta \varphi_{tt}(t,x) \,\d t\, \d x .  \label{eq4.2}
\end{align}
The same technique applied to the second term yields
\begin{align}
 &\int_{0}^{T}\int_{0}^{l} \delta y_{xx}(t,x) \delta \varphi(t,x) \,\d x\, \d t = \int_{0}^{T} \left[\delta y_{x}(t,x) \delta \varphi(t,x) \right]_{0}^{l}\,\d t -  \int_{0}^{T}\int_{0}^{l} \delta y_{x}(t,x) \delta \varphi_{x}(t,x)\, \d x\,\d t \nonumber\\
 &= \int_{0}^{T} \left[\delta y_{x}(t,x) \delta \varphi(t,x) \right]_{0}^{l}\,\d t - \int_{0}^{T} \left[\delta y(t,x) \delta \varphi_{x}(t,x) \right]_{0}^{l}\,\d t +  \int_{0}^{T}\int_{0}^{l} \delta y(t,x) \delta \varphi_{xx}(t,x)\, \d x\,\d t \nonumber\\
 &= \int_{0}^{T} \delta y_{x}(t,l) \delta \varphi(t,l)\,\d t - \int_{0}^{T} \delta y_{x}(t,0) \delta \varphi(t,0)\,\d t -\int_{0}^{T} \delta y(t,l) \delta \varphi_{x}(t,l)\,\d t \nonumber \\
 & \quad + \int_{0}^{T} \delta y(t,0) \delta \varphi_{x}(t,0)\,\d t + \int_{0}^{T}\int_{0}^{l} \delta y(t,x) \delta \varphi_{xx}(t,x)\, \d x\,\d t .\label{eq4.3}
\end{align}
Let us calculate the first term in the right-hand side of \eqref{eq4.3},
\begin{align}
\int_{0}^{T} \delta y_{x}(t,l) \delta \varphi(t,l)\,\d t &= - \int_{0}^{T} \delta y_{tt}(t,l) \delta \varphi(t,l)\,\d t \nonumber\\
&= - \left[\delta y_{t}(t,l)\delta \varphi(t,l)  \right]_{0}^{T} + \int_{0}^{T} \delta y_{t}(t,l) \delta \varphi_{t}(t,l)\,\d t \nonumber\\
&= \left[\delta y(t,l)\delta \varphi_{t}(t,l)  \right]_{0}^{T} - \int_{0}^{T} \delta y(t,l) \delta \varphi_{tt}(t,l)\,\d t \nonumber\\
&= -\left(\delta y(T,l)  \right)^{2} + \int_{0}^{T} \delta y(t,l) \delta \varphi_{x}(t,l)\,\d t . \label{eq4.4}
\end{align}
Similarly for the second term in the right-hand side of \eqref{eq4.3}, we have
\begin{align}
&\int_{0}^{T} \delta y_{x}(t,0) \delta \varphi(t,0)\,\d t = \int_{0}^{T} \delta G(t) \delta \varphi(t,0)\,\d t  - \int_{0}^{T} \delta y_{tt}(t,0) \delta \varphi(t,0)\,\d t \nonumber\\
&=\int_{0}^{T} \delta G(t) \delta \varphi(t,0)\,\d t - \left[\delta y(t,0)\delta \varphi_{t}(t,0)  \right]_{0}^{T} - \int_{0}^{T} \delta y_{t}(t,0) \delta \varphi_{tt}(t,0)\,\d t \nonumber\\
&=\int_{0}^{T} \delta G(t) \delta \varphi(t,0)\,\d t -  \left(\delta y(T,0) \right)^{2} - \int_{0}^{T} \delta y(t,0) \delta \varphi_{tt}(t,0)\,\d t \nonumber\\
&=\int_{0}^{T} \delta G(t) \delta \varphi(t,0)\,\d t -\left(\delta y(T,0)  \right)^{2} - \int_{0}^{T} \delta y(t,0) \delta \varphi_{x}(t,0)\,\d t .\label{eq4.5}
\end{align}
By making use of \eqref{eq4.3}-\eqref{eq4.5}, we obtain
\begin{align}
\int_{0}^{T}\int_{0}^{l} \delta y_{xx}(t,x) \delta \varphi(t,x) \,\d x\, \d t &=  \int_{0}^{T} \delta G(t) \delta \varphi(t,0)\,\d t -\left(\delta y(T,0)  \right)^{2} -\left(\delta y(T,l)  \right)^{2}\nonumber \\
& \quad +\int_{0}^{T}\int_{0}^{l} \delta y(t,x) \delta \varphi_{xx}(t,x)\, \d x\,\d t .\label{eq4.6}
\end{align}
Finally, thanks to \eqref{eq4.2} and \eqref{eq4.6}, we deduce
\begin{equation*}
\langle \mathcal{J}'(\mathcal{F} + \delta \mathcal{F})- \mathcal{J}'(\mathcal{W}),\delta \mathcal{W}\rangle_{\mathbb{L}^2_T}= \int_{0}^{l} \left( \delta y(T,x) \right)^{2}\,\d x + \left(\delta y(T,l)  \right)^{2} +\left(\delta y(T,l)  \right)^{2}.
\end{equation*}
This completes the proof.
\end{proof}

Since the the cost functional $\mathcal{J}$ is continuous and convex on $\mathcal{U}_\mathrm{ad}$, the problem \eqref{neq2.5}-\eqref{neq2.6} has at least one solution on $\mathcal{U}_\mathrm{ad}$, see \cite[Theorem 25.C]{Ze'90}. 
On the other hand, the strict monotonicity of $\mathcal{J}'$ implies the strict convexity of $\mathcal{J}$. Then, if in addition the following condition holds
\begin{equation}\label{eq4.7}
  \|\delta Y(T,\cdot, \mathcal{W})\|^2_{\mathbb{L}^2} > 0 \qquad \forall \mathcal{W} \in \mathcal{V}  
\end{equation}
for a closed convex subset $\mathcal{V} \subset \mathcal{U}_\mathrm{ad}$, then the problem \eqref{neq2.5}-\eqref{neq2.6} admits at most one solution in $\mathcal{V}$. Note that the non-uniqueness of a quasi-solution occurs in the general case of time-space dependent sources.

\section{Numerical results for one spatial wave force}\label{sec5}
In the following, we will present some numerical tests obtained using the quasi-solution approach combined with a Conjugate Gradient (CG) algorithm. We seek to reconstruct the spatial force $f(x)$ in the following wave equation with dynamic boundary conditions
\begin{empheq}[left = \empheqlbrace]{alignat=2}
\begin{aligned}
&y_{tt}(t, x)-y_{x x}(t, x)=f(x)r(t,x), &&\qquad(t,x)\in (0,T)\times (0,l)  , \\
&y_{tt}(t, 0) - y_{x}(t, 0)=0, &&\qquad t\in (0,T), \\
&y_{tt}(t, l) + y_{x}(t, l)=0, &&\qquad t\in (0,T), \\
&(y(0,x), y(0,0),y(0,l)) = (y_{0}(x),a,b), &&\qquad \text{on } (0,l), \\
&(y_{t}(0,x),y_{t}(0,0),y_{t}(0,l)) = (y_{1}(x),c,d),   &&\qquad \text{on } (0,l),
\label{1deq1to4}
\end{aligned}
\end{empheq}
where the space-time dependent component $r \in C^1\left([0,T]; C\left([0,l]\right)\right)$ is a known function. Note that such a form of source terms is needed to fulfill the uniqueness of the solution, see e.g. \cite{CD'70}. Furthermore, it covers many practical applications from control theory, among other fields.

We design an iterative algorithm based on the theoretical study carried out in the previous sections. Let $Y(t,x,f):=\left(y(t,x),y(t,0),y(t,l)\right)$ denote the solution of \eqref{1deq1to4}. The input-output operator $\Psi \colon L^2(0,l) \longrightarrow L^2(0,l)\times \mathbb{R}^2$ is defined as
$$(\Psi f)(x):=Y(T,x,f)=\left(y(T,x),y(T,0),y(T,l)\right), \qquad x\in (0,l),$$
and the corresponding Tikhonov functional is given by
\begin{align*}
J_\varepsilon(f)&=\frac{1}{2} \left\|Y(T, \cdot,f)-Y_{T}^\delta \right\|_{L^2(0,l)\times \mathbb{R}^2}^2 +\frac{\varepsilon}{2} \|f\|_{L^2(0,l)}^2, \qquad f\in L^2(0,l), \\
& \hspace{-0.3cm}= \frac{1}{2} \left(\left\|y(T, \cdot)-y_{T}^\delta\right\|_{L^2(0,l)}^2 + \left|y(T,0)-y_T^{0,\delta}\right|^2 + \left|y(T,l)-y_T^{l,\delta}\right|^2 + \varepsilon \|f\|_{L^2(0,l)}^2\right), \notag
\end{align*}
where $Y_{T}^\delta:=\left(y_{T}^\delta, y_T^{0,\delta}, y_T^{l,\delta}\right) \in L^2(0,l)\times \mathbb{R}^2$.
The corresponding adjoint system is given by
\begin{empheq}[left = \empheqlbrace]{alignat=2}
\begin{aligned}
& \varphi_{tt}(t, x)-\varphi_{x x}(t, x)=0, &&\hspace{-1cm} (t,x)\in (0,T)\times (0,l)  , \\
& \varphi_{tt}(t, 0) - \varphi_{x}(t, 0)=0, && t\in (0,T), \\
& \varphi_{tt}(t, l) + \varphi_{x}(t, l)=0, && t\in (0,T), \\
&(\varphi(T,x), \varphi(T,0),\varphi(T,l)) = (0,0,0) , &&\, \text{on } (0,l), \\
&(\varphi_{t}(T,x),\varphi_{t}(T,0),\varphi_{t}(T,l)) = - (Y(T,x,\mathcal{W}) - Y_{T}^\delta ),   &&\, \text{on } (0,l).
\label{1daeq1to4}
\end{aligned}
\end{empheq}
A simple calculation shows that the gradient of $J_\varepsilon$ is given by
\begin{equation}\label{1dj'}
J_\varepsilon'(f)(x)=\int_0^T \varphi(t,x,f) r(t,x) \,\d t + \varepsilon f(x), \quad f\in L^2(0,l),\;  x\in (0,l).
\end{equation}
This formula for the gradient of $J_\varepsilon$ gives the possibility to apply various CG algorithms corresponding to different coefficients. 

Next, we define the convergence error and the accuracy error respectively by
\begin{equation}\label{err}
\begin{aligned}
e(k,f_k)&:=\|\Psi f_k -Y_T\|_{L^2(0,1)\times \mathbb{R}^2}^2\\
E(k,f_k)&:=\|f-f_k\|_{L^2(0,1)}.
\end{aligned}
\end{equation}

We perturb the exact data by different levels of noise and compare the exact source term to the recovered one. The noisy measured data is generated as follows
$$Y_T^\delta(x) =Y_T(x) + p \times\|Y_T\|_{L^2(0,l)\times \mathbb{R}^2} \times \mathrm{Random},$$
where $p$ designates the noise level, and the function `Random' produces random real numbers.

In all numerical experiments, we take for simplicity the following values
$$T=2, \quad l=1, \quad r=1, \quad y_0=y_1=0, \quad a=b=c=d=0.$$

In what follows, we apply the following CG algorithm:
\begin{algorithm}
 \caption{CG algorithm}\label{alg1}
 Set $k=0$ and choose an initial source $f_0$\;
 Solve the direct problem \eqref{1deq1to4} to obtain $Y(t,x,f_0)$\;
 Knowing the computed $Y(T,x,f_0)$ and the measured $Y_T^\delta$, solve the adjoint problem \eqref{1daeq1to4} to obtain $\varphi(t,x,f_0)$\;
 Compute the gradient $p_0=J_\varepsilon'(f_0)$ using \eqref{1dj'}\;
 Solve the direct problem \eqref{1deq1to4} with source $p_k$ to obtain the solution $\Psi p_k$\;
 Compute the relaxation parameter $\displaystyle \alpha_k =\frac{\|J_\varepsilon'(f_k)\|_{L^2(0,l)}^2}{\|\Psi p_k\|_{L^2(0,l)\times \mathbb{R}^2}^2 + \varepsilon \|p_k\|_{L^2(0,l)}^2}$ \;
 Find the next iteration $f_{k+1}=f_k- \alpha_k p_k$\;
 Stop the iteration process if the stopping criterion $J_\varepsilon(f_{k+1}) <e_J$ holds. Otherwise, set $k:=k+1$ and compute
 $$\gamma_k=\frac{\|J_\varepsilon'(f_k)\|_{L^2(0,l)}^2}{\|J_\varepsilon'(f_{k-1})\|_{L^2(0,l)}^2} \qquad \text{and} \qquad p_k=J_\varepsilon'(f_k)+\gamma_k p_{k-1},$$
 and go to Step 6.
\end{algorithm}

Next, the initial iterations are chosen as $f_0=0$ and the regularization parameter and the stopping parameter as $\varepsilon=e_J=10^{-8}$. 

\subsection*{Example 1}
The exact source term to be reconstructed is $$f(x)=\frac{1}{2}\left(\sin(\pi x)+\sqrt{x}\right), \; x\in (0,1).$$

\begin{figure}[ht] 
\centering
\includegraphics[scale=0.5]{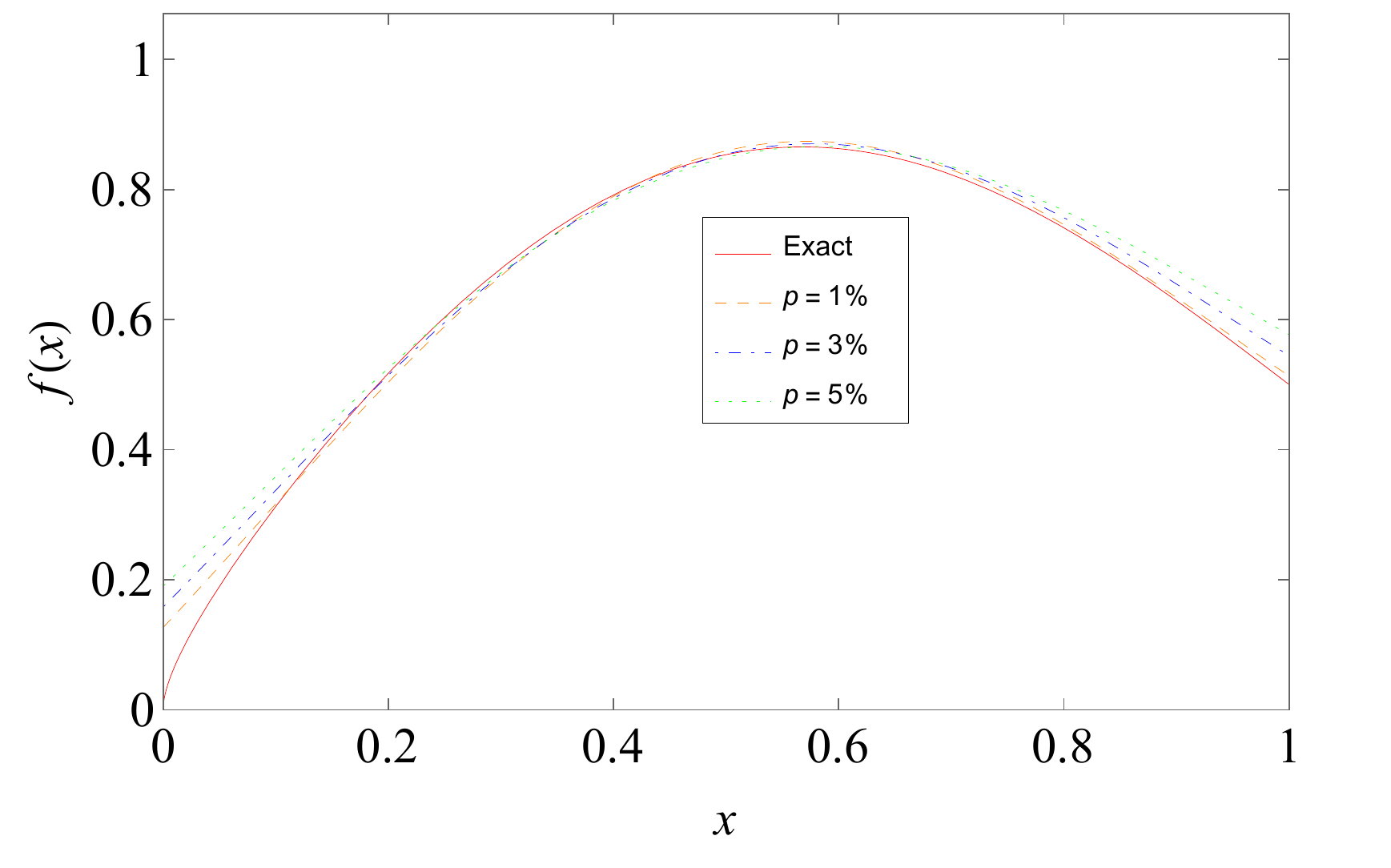}
\caption{Exact and recovered $f(x)$ by using Algorithm \ref{alg1} for $p\in \{1\%,3\%,5\%\}$, respectively.} \label{fig1}
\end{figure}

The algorithm stops at iterations $k\in \{6,8,9\}$, for $p\in \{1\%,3\%,5\%\}$, respectively.

\begin{table}[ht] 
\caption{Errors depending on the iteration number $k$ for noise free data ($p=0\%$).}
\centering
$\begin{array}{cccccc}
\hline
 k & 1 & 2 & 3 & 4 & 5 \\
\hline
 e\left(k,f_k\right) & 7.547\times 10^{-1} & 4.217\times 10^{-2} & 3.447\times 10^{-3} & 3.445\times 10^{-3} & 2.108\times 10^{-3} \\
\hline
 \text{E}\left(k,f_k\right) & 2.015\times 10^{-1} & 1.747\times 10^{-1} & 1.746\times 10^{-1} & 6.744\times 10^{-2} & 1.526\times 10^{-2} \\
\hline
\end{array}$ \label{tab1}
\end{table}

\subsection*{Example 2}
The exact source term is $$f(x)=2 \pi x^2(1-x), \; x\in (0,1).$$

\begin{figure}[ht] 
\centering
\includegraphics[scale=0.5]{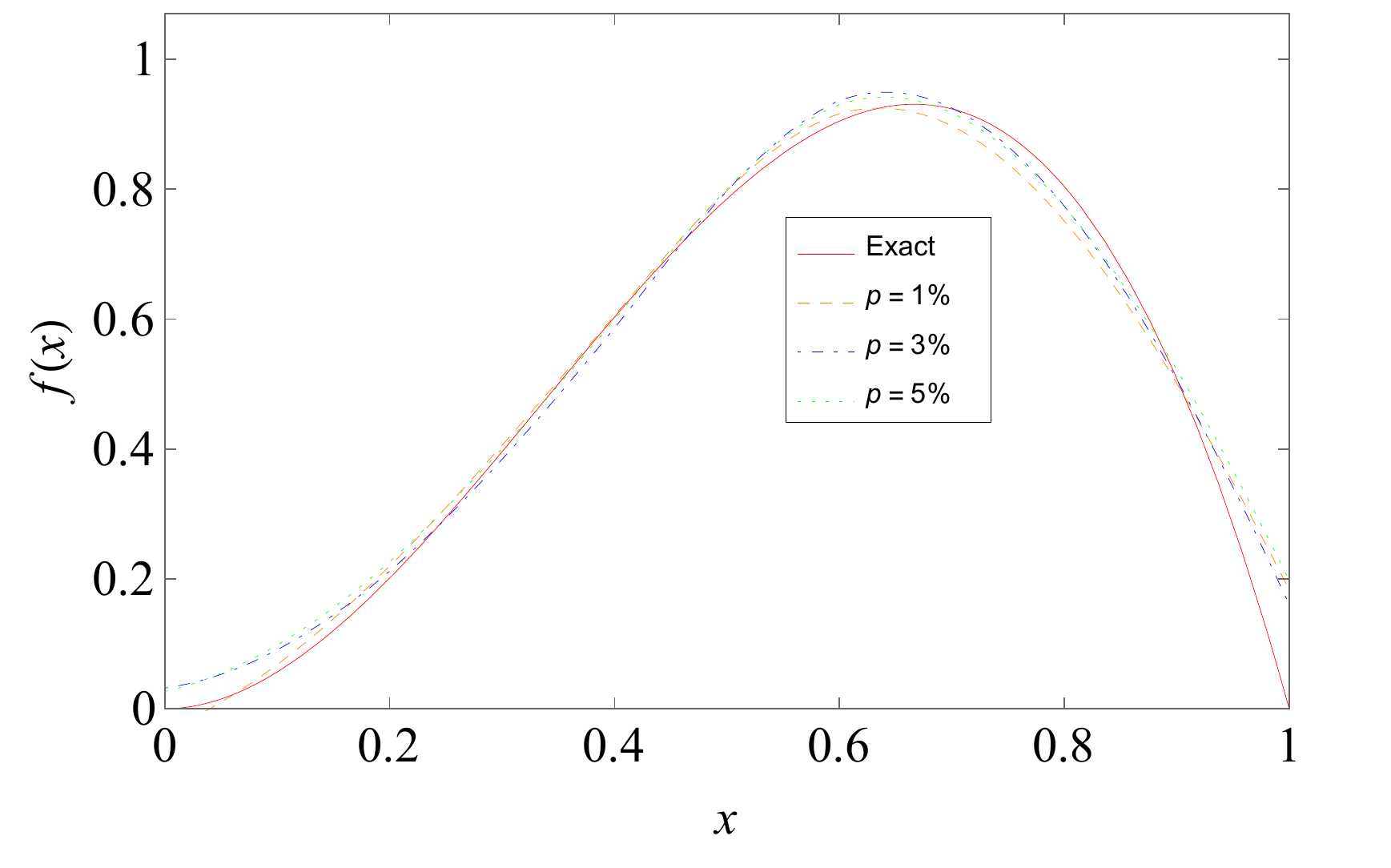}
\caption{Exact and recovered $f(x)$ by using Algorithm \ref{alg1} for $p\in \{1\%,3\%,5\%\}$, respectively.} \label{fig2}
\end{figure}

The algorithm stops at iterations $k\in \{11,19,17\}$, for $p\in \{1\%,3\%,5\%\}$, respectively.

\begin{table}[ht] 
\caption{Errors depending on the iteration number $k$ for noise free data ($p=0\%$).}
\centering
$\begin{array}{cccccc}
\hline
 k & 1 & 2 & 3 & 4 & 5 \\
\hline
 e\left(k,f_k\right) & 6.099\times 10^{-1} & 6.744\times 10^{-2} & 4.632\times 10^{-3} & 4.629\times 10^{-3} & 3.134\times 10^{-3} \\
\hline
 \text{E}\left(k,f_k\right) & 3.054\times 10^{-1} & 2.603\times 10^{-1} & 2.601\times 10^{-1} & 1.569\times 10^{-1} & 1.149\times 10^{-1} \\
\hline
\end{array}$ \label{tab2}
\end{table}

\subsection*{Example 3}
We take the exact source term as $$f(x)=\dfrac{1}{4}\left(\arctan\left(\dfrac{x}{\pi}\right)-\sin(2 \pi x)\right) +\dfrac{1}{2}, \; x\in (0,1).$$

\begin{figure}[ht] 
\centering
\includegraphics[scale=0.5]{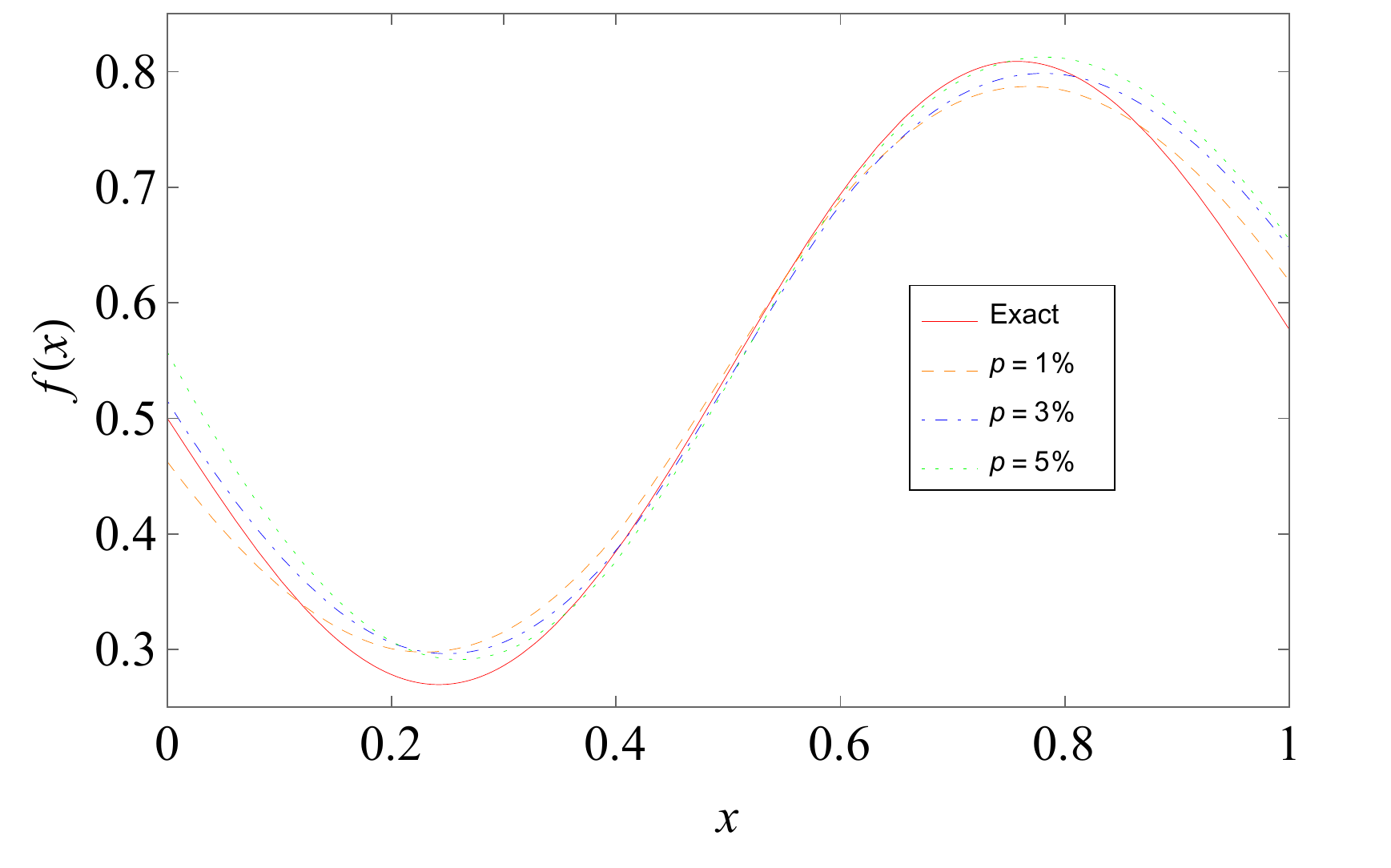}
\caption{Exact and recovered $f(x)$ by using Algorithm \ref{alg1} for $p\in \{1\%,3\%,5\%\}$, respectively.} \label{fig3}
\end{figure}
\newpage

The algorithm stops at iterations $k\in \{13,15,16\}$ for $p\in \{1\%,3\%,5\%\}$, respectively.

\begin{table}[ht] 
\caption{Errors depending on the iteration number $k$ for noise free data ($p=0\%$).}
\centering
$\begin{array}{cccccc}
\hline
 k & 1 & 2 & 3 & 4 & 5 \\
\hline
 e\left(k,f_k\right) & 6.265\times 10^{-1} & 5.936\times 10^{-2} & 2.904\times 10^{-4} & 2.858\times 10^{-4} & 2.858\times 10^{-4} \\
\hline
 \text{E}\left(k,f_k\right) & 1.77\times 10^{-1} & 1.077\times 10^{-1} & 1.077\times 10^{-1} & 1.077\times 10^{-1} & 1.076\times 10^{-1} \\
\hline
\end{array}$ \label{tab3}
\end{table}

From Figs. \ref{fig1}, \ref{fig2} and \ref{fig3}, we clearly see that the recovered source terms $f$ corresponding to different levels of random noise are not that far from the exact source terms. This effectively shows that the designed Algorithm \ref{alg1} yields numerically stable results. It also shows the regularizing effects of both the regularization parameter $\varepsilon$ and the CG method.

The error tables \ref{tab1}, \ref{tab2} and \ref{tab3} show that the convergence error and the accuracy error decrease as the iteration number $k$ increase.

\section{Conclusions and final remarks}\label{sec6}
We have studied an inverse source problem for identifying forcing terms from the terminal time data in a linear wave equation with dynamic boundary conditions. Using the weak solution approach, an optimization method has been adapted for the Tikhonov's cost functional. Then, an explicit gradient formula for the cost has been derived via the solution of an adequate adjoint system. The Lipschitz continuity of the gradient has been shown. Next, the existence and the uniqueness of a solution to the minimization problem have been discussed, and a sufficient condition for the uniqueness has been given. A numerical CG algorithm has been designed to recover an internal wave force.

\newpage
\begin{remark} We close the paper with the following remarks:
\begin{itemize}
    \item Although we have only considered a simplified model of hyperbolic systems with dynamic boundary conditions, our approach can be generalized to more general models as
    \begin{empheq}[left = \empheqlbrace]{alignat=2}
    \begin{aligned}
    &\partial_{t}^2 y -d \Delta y + a(x)y = F(t,x), &&\qquad \text{in } \Omega_T , \\
    &\partial_{t}^2 y_{\Gamma} -\gamma \Delta_{\Gamma} y_{\Gamma}+d\partial_{\nu} y + b(x)y_{\Gamma} = G(t,x), &&\qquad \text{on } \Gamma_T, \\
    &y_{\Gamma}(t,x) = y_{|\Gamma}(t,x), &&\qquad \text{on } \Gamma_T, \\
    &(y,y_{\Gamma})\rvert_{t=0}=(y_0, y_{0,\Gamma}),   &&\qquad \Omega\times\Gamma\\
    &(\partial_{t} y,\partial_{t} y_{\Gamma})\rvert_{t=0}=(y_1, y_{1,\Gamma}),   &&\qquad \Omega\times\Gamma,
    \end{aligned}
    \end{empheq}
    where $\Omega \subset \mathbb{R}^N$ $(N\le 3)$ is a bounded domain with smooth boundary $\Gamma$, $a\in L^\infty(\Omega)$, $b \in L^\infty(\Gamma)$, and $d, \gamma>0$ are given speed constants.
\item Comparing to the heat equation with dynamic boundary conditions, the proposed Landweber scheme in \cite{ACM'21'} becomes slow for the wave equation \eqref{eq1to4}. This issue can be interpreted in terms of the measured data we have considered. In our case, we have only used the final time data $Y(T, \cdot)$, while we might also add the final speed $Y_t(T, \cdot)$ as a measurement in view of \cite{Ha'09'}. This issue has been fixed by considering a different CG algorithm that yields fast and accurate numerical results.
\end{itemize}
\end{remark}

%\backmatter

%\section*{Acknowledgments}
%We would like to thank the anonymous reviewers for their invaluable comments. There are no funders to report for this submission.

%\subsection*{Conflict of interest}
%This work does not have any conflicts of interest.

%\nocite{*}% Show all bib entries - both cited and uncited; comment this line to view only cited bib entries;
%\bibliography{wileyNJD-AMA}%

\end{document}